\theoremstyle{plain}
\newtheorem{Pocz}{Poczatek}[section]
\newtheorem{Proposition}[Pocz]{Proposition}
\newtheorem{Theorem}[Pocz]{Theorem}
\newtheorem{Corollary}[Pocz]{Corollary}
\newtheorem{Lemma}[Pocz]{Lemma}
\newtheorem{Question}[Pocz]{Question}
\newtheorem{Problem}[Pocz]{Problem}
\newtheorem{Example}[Pocz]{Example}
\theoremstyle{definition}
\newtheorem{Definition}[Pocz]{Definition}
\theoremstyle{remark}
\newtheorem{Remark}[Pocz]{Remark}
\def\RR{{\mathbb R}}
\def\ZZ{{\mathbb Z}}
\numberwithin{equation}{section}
\author{Kyle ~ Austin}
\address{University of Tennessee, Knoxville, USA}
\email{kaustin9@vols.utk.edu}
\author{Jerzy~Dydak}
\address{University of Tennessee, Knoxville, USA}
\email{jdydak@utk.edu}
\author{Michael ~ Holloway}
\address{University of Tennessee, Knoxville, USA}
\email{holloway@math.utk.edu}
\title[Scale Structures and C*-algebras
]%
  {Scale Structures and C*-algebras}
\date{ \today
}
\keywords{}
\subjclass[2000]{Primary 46L85; Secondary 51K05}
\begin{document}

\maketitle

\begin{abstract}
The purpose of this paper is to investigate the duality between large scale and small scale. 
It is done by creating a connection between C*-algebras and scale structures.
In the commutative case we consider C*-subalgebras of $C^b(X)$, the C*-algebra of bounded complex-valued functions on $X$. Namely, each C*-subalgebra $\mathscr{C}$ of $C^b(X)$ induces both a small scale structure on $X$ and a large scale structure on $X$. The small scale structure induced on $X$ corresponds (or is analogous) to the restriction of $C^b(h(X))$ to $X$, where $h(X)$ is the Higson compactification.
The large scale structure induced on $X$ is a generalization of the $C_0$-coarse structure of N.Wright. Conversely, each small scale structure on $X$ induces  a C*-subalgebra of $C^b(X)$ and each large scale structure on $X$ induces  a C*-subalgebra of $C^b(X)$. To accomplish the full correspondence
between scale structures on $X$ and C*-subalgebras of $C^b(X)$ we need to enhance the scale structures to what we call hybrid structures. In the noncommutative case we consider C*-subalgebras
of bounded operators $B(l_2(X))$.
\end{abstract}

\section{Introduction}
The purpose of this paper is to investigate the duality between large scale and small scale. Historically, large scale was introduced under the name of coarse structures in \cite{Roe} and what we mean by small scale was known under the name of uniform structures (see \cite{I} or \cite{J}).
For the purpose of exposing the duality we are unifying the terminology here.

A study of formal analogy/duality between the scales was initiated in \cite{CDV2}. Here we go much further in the form of creating functors between large scale and small scale. Some precursors of those functors were hidden in the following concepts/constructions:\\
1. Higson compactification of a proper coarse space (see \cite{Roe}),\\
2. $C_0$-coarse structures of N.Wright (see \cite{W1} and \cite{W2}),\\
3. $C_0$-coarse structures on uniform spaces (see \cite{MYY} and \cite{MY}),\\
4. Continuously controlled coarse structures (see \cite{Roe}).

It turns out that one way to express the duality of large scale and small scale structures on a set $X$ is via C*-subalgebras of $C^b(X)$, the C*-algebra of bounded complex-valued functions on $X$ (see \cite{ADH2} for a description of duality between the scales bypassing C*-subalgebras). Namely, each C*-subalgebra $\mathscr{C}$ of $C^b(X)$ induces both a small scale structure on $X$ and a large scale structure on $X$. The small scale structure induced on $X$ corresponds to the restriction of $C^b(h(X))$ to $X$, where $h(X)$ is the Higson compactification.
The large scale structure induced on $X$ is a generalization of $C_0$-coarse structures of N.Wright (see \cite{W1} and \cite{W1}). Conversely, each small scale structure on $X$ induces  a C*-subalgebra of $C^b(X)$ and each large scale on $X$ induces  a C*-subalgebra of $C^b(X)$.

As a byproduct of our approach we get a more elegant formulation of the classic Stone-Weierstrass Theorem (see Theorem \ref{GenStoneWeierstrass}).

In the noncommutative case we consider C*-subalgebras of bounded operators $B(l_2(X))$ and we show how they induce pure scale structures on $X$.
\section{Scale Structures}

\subsection{An Introduction to Scales}

One of the goals of this paper is to unify approaches to coarse structures and uniform structures. In coarse geometry, the basic concept is that of a uniformly bounded cover. In uniform topology, the basic concept is that of a uniform cover. We generalize those by thinking of covers as scales where the comparison is not via refinement or coarsening but by using stars of covers.

For a small scale or large scale structure, we need not only scales but also a way to compare scales. For example, in the small scale, one should be able to \lq zoom in\rq\ to the space by going to a smaller scale, and in the large scale one should be able to \lq zoom out\rq\ to a larger scale. To capture this idea, we use the relation of star refinement. 

\begin{Definition}
A \textbf{scale} on a set $X$ is a cover $\mathcal{U}$ of $X$. A scale $\mathcal{U}$ is \textbf{smaller}
than a scale $\mathcal{V}$ if $st(\mathcal{U},\mathcal{U})$ refines $\mathcal{V}$ and $\mathcal{U}\ne \mathcal{V}$. Obviously, in that case $\mathcal{V}$ is \textbf{larger} than $\mathcal{U}$.
Recall that $st(\mathcal{U},\mathcal{U})$ consists of stars $st(U,\mathcal{U})$, $U\in \mathcal{U}$, which are defined as the union of $V\in \mathcal{U}$ intersecting $U$.
 \end{Definition}
 
  A \textbf{scale structure} on $X$ is a collection of scales of $X$ with some order properties.

 Every scale structure on a space can be made into a partially ordered set by using the relation of star refinement. That is, $\mathcal{U} \leq \mathcal{V}$ if and only if $st(\mathcal{U}, \mathcal{U}) \prec \mathcal{V}$ (i.e., $st(\mathcal{U}, \mathcal{U})$ refines $\mathcal{V}$). 

A small scale structure will be a scale structure in which one can always zoom into the space and view it from smaller and smaller scales. Using the language of filters, a \textbf{small scale structure} is a filter of the set of all covers of a space, ordered by star refinement, and a base for a small scale structure is a filter base of this poset. Dually, a large scale structure on a space is a scale structure which allows one to zoom out and view the space from farther and farther away. That is, a  \textbf{large scale structure} is a filter of the set of all covers of a space, ordered by reverse star refinement, and a base for a large scale structure is a filter base of this poset. Another way to define these is to say that a small scale structure is a scale structure in which one can always decrease scale (unless there is a scale $\mathcal{U}$
such that $\mathcal{U}=st(\mathcal{U}, \mathcal{U})$ and $\mathcal{U}$ refines all other scales) and a large scale structure is one in which one can always increase scale (unless there is a scale $\mathcal{U}$
such that $\mathcal{U}=st(\mathcal{U}, \mathcal{U})$ and $\mathcal{U}$ coarsenes all other scales). In the remainder of this section, we shall first introduce the most important examples of scale structures, which are metric structures and translation structures, and then we will introduce the general definitions of small and large scale structures.

\subsection{Small Scale Structures}

The general theme is that in small scale geometry, one zooms inward on a space through  star refining covers, while in large scale geometry, one works outward on a space through  star coarsening covers. In order to exhibit the duality between large and small scales we will adjust the terminology accordingly.

Let's translate basic definitions from the theory of uniform spaces (see \cite{I} or \cite{J}) into the language of scales.  

\begin{Definition} \label{UniformDef}
A \textbf{small scale structure} (ss-structure for short) on a set $X$ is a filter  $\mathcal{SS}$ of scales on $X$. In other words:

1)  if $\mathcal{U}_1 \in \mathcal{SS}$ and $\mathcal{U}_2$ is a cover of $X$ which coarsens $\mathcal{U}_1,$ then $\mathcal{U}_2 \in \mathcal{SS}$;

2) if $\mathcal{U}_1$, $\mathcal{U}_2 \in \mathcal{SS}$ then there exists $\mathcal{U}_3 \in \mathcal{SS}$ which star refines both $\mathcal{U}_1$ and $\mathcal{U}_2$.

For simplicity, elements of $\mathcal{SS}$ will be called $\mathcal{SS}$-scales or s-scales if $\mathcal{SS}$ is clearly understood in a particular context.

A set equipped with an ss-structure is called a \textbf{small scale space} (or ss-space for short). 
\end{Definition}

\begin{Remark}
s-scales
were traditionally called \textbf{uniform covers} (see \cite{I} or \cite{J}).
\end{Remark}

\begin{Definition}
 A function $f: X \to Y$ of ss-spaces is \textbf{small scale continuous} (ss-continuous for short) if the inverse image of an s-scale of $Y$ is an s-scale of $X$.
Equivalently, for any s-scale $\mathcal{V}$ of $Y$ there is an s-scale $\mathcal{U}$ of $X$ such that $\{f(U) : U \in \mathcal{U}\} $ refines $\mathcal{V}.$
 \end{Definition}
 
 We say that a small scale structure $\mathcal{SS}$ is \textbf{Hausdorff} if for each $x\ne y\in X$ there exists a scale $\mathcal{U} \in \mathcal{SS}$ which has no set containing both $x$ and $y$.
 
 \begin{Remark}
 Given any family of scale structures we can consider their union or their intersection.
 That quickly leads to the concepts of smallest or largest scale structures satisfying certain conditions.
\end{Remark}

\subsubsection{Topology and Small Scale Structures}
Every small scale structure $\mathcal{SS}$ induces a \textbf{topology} on its underlying set as follows: a subset $U \subseteq X$ is \textbf{open} provided for each $x \in U$ there is a scale $\mathcal{V} \in \mathcal{SS}$ such that $st(\{x\},\mathcal{V}) \subset U$.

Notice that for each s-scale $\mathcal{U}$, its interiors form an s-scale (given $U\in \mathcal{U}$ and given an s-scale $\mathcal{V}$, the set of points $x$ such that
$st(st(x,\mathcal{V}),\mathcal{V})\subset U$ is open). Let's call  scales consisting of open sets \textbf{open scales}. That means open s-scales form a basis of any ss-structure.

\begin{Proposition}\label{PUs}
Let $(X,\mathscr{SS})$ be a small scale space inducing the topology $\mathscr{T}$. Any open s-scale $\mathcal{U}$ of $X$ (with respect to $\mathscr{T}$) has a continuous partition of unity subordinated to it.
\end{Proposition}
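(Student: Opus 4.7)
The plan is to reduce to the pseudometric case via the Alexandroff--Urysohn construction and then invoke Stone's theorem on pseudometric spaces.

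First, using axiom (2) of Definition \ref{UniformDef} together with the fact (noted in the paragraph preceding the proposition) that open s-scales form a basis of $\mathscr{SS}$, I would choose a sequence $\mathcal{U}=\mathcal{U}_0,\mathcal{U}_1,\mathcal{U}_2,\dots$ of open s-scales with $\mathcal{U}_{n+1}$ star-refining $\mathcal{U}_n$. Then apply the standard Alexandroff--Urysohn chain construction: put $r(x,y)=\inf\{2^{-n}:\{x,y\}\subset U\text{ for some }U\in\mathcal{U}_n\}$ and
\[
d(x,y)=\inf\Bigl\{\sum_{i=1}^{k} r(z_{i-1},z_i) : x=z_0,z_1,\dots,z_k=y\Bigr\}.
\]
The classical estimate $\tfrac{1}{2}r\le d\le r$ gives, on the one hand, $\st(x,\mathcal{U}_n)\subset\{y:d(x,y)\le 2^{-n}\}$ (so the $d$-topology is coarser than $\mathscr{T}$ and $d$ is jointly $\mathscr{T}$-continuous), and, on the other hand, that every ball $B_d(x,2^{-n-1})$ is contained in $\st(x,\mathcal{U}_{n+1})$, which in turn sits inside some element of $\mathcal{U}_n$.

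Second, the cover $\{B_d(x,2^{-2}):x\in X\}$ is $d$-open and refines $\mathcal{U}$. Since $(X,d)$ is a pseudometric space, Stone's theorem yields a locally finite $d$-open refinement $\mathcal{W}$. The estimate from the first step (applied after one additional star-refinement if desired) gives, for each $W\in\mathcal{W}$, an element $U(W)\in\mathcal{U}$ with $\overline{W}^{\mathscr{T}}\subset\overline{W}^{d}\subset U(W)$, using that $\mathscr{T}$ is finer than the $d$-topology, so $\mathscr{T}$-closures sit inside $d$-closures. Define $\psi_W(x)=d(x,X\setminus W)$; this function is $1$-Lipschitz in $d$, hence $\mathscr{T}$-continuous, vanishes exactly off $W$, and is positive on $W$ because $W$ is $d$-open. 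Local finiteness of $\mathcal{W}$ makes $\Sigma:=\sum_{W\in\mathcal{W}}\psi_W$ a locally finite $\mathscr{T}$-continuous function that is strictly positive everywhere. The quotients $\phi_W:=\psi_W/\Sigma$ then form a continuous partition of unity subordinated to $\mathcal{U}$ via the assignment $W\mapsto U(W)$.

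The technical heart of the argument is the first step: verifying the Alexandroff--Urysohn estimates, and in particular that the topology induced by $d$ is coarser than $\mathscr{T}$ so that $d$-open sets are $\mathscr{T}$-open and $d$-Lipschitz functions are $\mathscr{T}$-continuous. The conceptual key is the appeal to Stone's theorem on the pseudometric space $(X,d)$: this sidesteps the possibility that $(X,\mathscr{T})$ itself is not paracompact, by turning paracompactness of a coarser topology into a subordinate partition of unity for the finer one.
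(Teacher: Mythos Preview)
Your argument is correct, and it shares its opening move with the paper: both begin by fixing a star-refining chain $\mathcal{U}=\mathcal{U}_0,\mathcal{U}_1,\dots$ of open s-scales. From there the routes diverge. The paper simply invokes \cite[Theorem~3.4]{AD}, which produces a partition of unity subordinated to $\mathcal{U}$ directly from such a chain (in the spirit of the ``derivative'' operation on partitions of unity mentioned in the proof of \ref{ssAndPUs}); no pseudometric and no appeal to paracompactness enter. You instead run the classical Alexandroff--Urysohn pseudometrization, then use Stone's theorem on $(X,d)$ to obtain a locally finite refinement, and build the partition of unity from $d$-distance functions. Your approach has the virtue of being self-contained modulo two textbook theorems and of making explicit why paracompactness of $(X,\mathscr{T})$ is not needed: you borrow it from the coarser pseudometric topology. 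The paper's approach is shorter and stays within its preferred framework of manipulating partitions of unity directly, avoiding both the pseudometric detour and Stone's theorem. One cosmetic point: your partition of unity is indexed by the refinement $\mathcal{W}$ rather than by $\mathcal{U}$; to match the paper's stated form $\{\phi_U\}_{U\in\mathcal{U}}$, push forward along the assignment $W\mapsto U(W)$ by setting $\phi_U=\sum_{U(W)=U}\phi_W$, which local finiteness makes harmless.
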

\begin{proof}
Given an open s-scale $\mathcal{U}$, one can create a decreasing sequence of open s-scales $\mathcal{U}_n$, $n\ge 1$, so that $\mathcal{U}_1=\mathcal{U}$. There is a simple proof in  \cite{AD} (see Theorem 3.4) that in such a case there is a partition of unity subordinated to $\mathcal{U}$, i.e. a family of non-negative continuous functions $\{\phi_U\}_{U\in \mathcal{U}}$ adding up to $1$ with the support of each $\phi_U$ contained in $U$. That is exactly what we mean by a partition of unity \textbf{subordinated} to $\mathcal{U}$.
\end{proof}

\begin{Corollary}
If the ss-structure is Hausdorff, then the induced topology is completely regular.
\end{Corollary}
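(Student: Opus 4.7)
The plan is to combine Proposition \ref{PUs} with the Hausdorff hypothesis to manufacture, for every closed set $C$ and every point $x\notin C$, a continuous function $f\colon X\to[0,1]$ with $f(x)=1$ and $f|_C\equiv 0$.

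First I would verify that the induced topology is $T_1$, so that singletons are closed. Given $x\ne y$, the Hausdorff hypothesis supplies an s-scale $\mathcal{U}$ with no element containing both points; passing to an open refinement (open s-scales form a basis), the set $st(y,\mathcal{U})$ is an open neighbourhood of $y$, and it cannot contain $x$ because any $U\in\mathcal{U}$ witnessing $x\in st(y,\mathcal{U})$ would have to contain $y$ as well. Thus $X\setminus\{x\}$ is open.

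Next, given a closed set $C$ and a point $x\notin C$, the set $X\setminus C$ is an open neighbourhood of $x$, so by definition of the induced topology there is an open s-scale $\mathcal{V}\in\mathcal{SS}$ with $W:=st(x,\mathcal{V})\subset X\setminus C$. Now I would introduce the two-element cover $\mathcal{U}=\{W,\, X\setminus\{x\}\}$. Both members are open (using $T_1$ for the second), and $\mathcal{V}$ refines $\mathcal{U}$: each $V\in\mathcal{V}$ that meets $\{x\}$ lies in $W$, and each $V$ missing $x$ lies in $X\setminus\{x\}$. Since $\mathcal{SS}$ is closed under coarsening, $\mathcal{U}$ is an open s-scale.

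Now I would invoke Proposition \ref{PUs} to obtain a continuous partition of unity $\{\phi_W,\phi_{X\setminus\{x\}}\}$ subordinated to $\mathcal{U}$. The function $f:=\phi_W$ is continuous, satisfies $f(x)=1$ because $\phi_{X\setminus\{x\}}(x)=0$, and vanishes on $C$ because $\operatorname{supp}(\phi_W)\subset W\subset X\setminus C$. This exhibits complete regularity. The main obstacle is really only conceptual rather than technical: one has to notice that turning the open neighbourhood $W$ of $x$ into a genuine s-scale requires adjoining $X\setminus\{x\}$, and this in turn is only legitimate once the Hausdorff hypothesis has been upgraded to $T_1$ at the topological level.
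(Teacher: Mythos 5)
Your argument is correct and follows the route the paper intends: the corollary is stated as an immediate consequence of Proposition \ref{PUs}, and you simply fill in the details (the $T_1$ step via the Hausdorff hypothesis, the two-element open s-scale $\{st(x,\mathcal{V}),\,X\setminus\{x\}\}$, and the subordinated partition of unity giving the separating function). Nothing is missing; the verification that $\{W,\,X\setminus\{x\}\}$ coarsens $\mathcal{V}$ and hence lies in $\mathcal{SS}$ is exactly the point that makes Proposition \ref{PUs} applicable.
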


It is less known that many concepts topologists have long been using, such as compactness, barycentric subdivision, and paracompactness, have origins in the uniform category. The subsequent results will make this more precise.

\begin{Proposition}\label{ssAndPUs}
Let $(X,\mathscr{T})$ be a topological space. The following are small scale structures on $X$:\\
1. The family of all covers having a finite continuous partition of unity subordinated to them.\\
2. The family of all covers having a continuous partition of unity subordinated to them.\\
If the topology $\mathscr{T}$ is completely regular, then both the above small scale structures induce $\mathscr{T}$, the first one is the smallest ss-structure inducing $\mathscr{T}$ and the second one is the largest ss-structure inducing $\mathscr{T}$.
\end{Proposition}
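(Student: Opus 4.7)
The plan is to verify in turn that (i) both families satisfy the filter axioms of Definition~\ref{UniformDef}, (ii) both induce $\mathscr{T}$ under complete regularity, and (iii) the extremality claims hold.

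For (i), closure under coarsening is routine: given a PoU $\{\phi_U\}_{U\in\mathcal{U}_1}$ and a coarsening $\mathcal{U}_2$ of $\mathcal{U}_1$ with a choice function $f\colon\mathcal{U}_1\to\mathcal{U}_2$ satisfying $U\subset f(U)$, the sums $\psi_V=\sum_{f(U)=V}\phi_U$ form a PoU subordinated to $\mathcal{U}_2$, and finiteness is preserved. For the star-refinement axiom, given PoUs $\{\phi_U\}$ and $\{\psi_V\}$ subordinated to $\mathcal{U}_1$ and $\mathcal{U}_2$, the products $\phi_U\psi_V$ yield a PoU subordinated to the common refinement $\{U\cap V\}$. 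To upgrade a plain refinement to a genuine star-refinement carrying a (finite in case (1)) PoU I would iterate the construction underlying Proposition~\ref{PUs} (cf.\ \cite{AD}): from an open cover with PoU one produces a strictly smaller open s-scale carrying a PoU by thresholding and renormalizing the $\phi_U$'s at successively lower levels.

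For (ii), the $\mathscr{T}$-direction uses complete regularity. Given $x\in V\in\mathscr{T}$, pick $f\colon X\to[0,1]$ continuous with $f(x)=1$ and $f|_{X\setminus V}=0$; then $g=\min(1,\max(0,3f-1))$ and $h=1-g$ form a finite continuous PoU subordinated to $\mathcal{W}=\{\{f>1/4\},\{f<3/4\}\}$, and $\st(x,\mathcal{W})=\{f>1/4\}\subset V$, so $\mathcal{W}\in(1)\subset(2)$ witnesses that $V$ is open in the induced topology. Conversely, for any PoU-subordinated cover $\mathcal{W}$, the open sets $\{\phi_W>0\}$ form a refinement $\mathcal{W}'$ carrying the same PoU, and $\st(x,\mathcal{W}')$ is $\mathscr{T}$-open and contained in $\st(x,\mathcal{W})$; hence every induced-open set is $\mathscr{T}$-open.

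For (iii), the "largest" half is immediate: any $\mathcal{U}$ in any ss-structure $\mathcal{SS}$ inducing $\mathscr{T}$ has an open-interior refinement that is still an s-scale, and Proposition~\ref{PUs} supplies a continuous PoU subordinated to it, placing $\mathcal{U}$ in (2). For "(1) smallest", let $\mathcal{SS}$ induce $\mathscr{T}$ and let $\mathcal{U}$ admit a finite continuous PoU $\{\phi_1,\dots,\phi_n\}$. For each $x$ some $\phi_{i(x)}(x)\ge1/n$, so $V_i=\{\phi_i>1/(2n)\}$ is open with $V_i\subset U_i$, and $\{V_1,\dots,V_n\}$ is a finite open refinement of $\mathcal{U}$. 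The inducing condition on $\mathcal{SS}$ together with the finite range of $i(\cdot)$ permits assembling a single $\mathcal{W}\in\mathcal{SS}$ whose stars refine $\{V_i\}$; closure under coarsening then places $\mathcal{U}$ in $\mathcal{SS}$. The main obstacle is the star-refinement step in (i), since the naive product $\phi_U\psi_V$ gives only a common refinement; the "(1) smallest" direction of (iii) is also delicate because aggregating pointwise neighborhoods into a single s-scale of $\mathcal{SS}$ uniformly over $X$ exploits the finite range of $i(\cdot)$ in an essential way.
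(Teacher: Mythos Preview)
Your argument for the star-refinement axiom in (i) is close in spirit to the paper's: the paper appeals to the \emph{derivative} of a partition of unity from \cite{DyPU}, which sends a PoU $\{\phi_s\}$ to a new PoU whose carrier is a pointwise star-refinement of the carrier of the original, so that a second derivative yields an honest star-refinement still carrying a (finite, if one started finite) PoU. Your ``thresholding and renormalizing'' is essentially this construction; note however that Proposition~\ref{PUs} and \cite{AD} run in the opposite direction (from a descending chain of s-scales to a PoU), so that is not the reference you want here.

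There is a genuine gap in your argument for ``(1) smallest'', and it cannot be repaired as stated. The step ``the finite range of $i(\cdot)$ permits assembling a single $\mathcal{W}\in\mathcal{SS}$ whose stars refine $\{V_i\}$'' fails: knowing that each $x$ admits some s-scale $\mathcal{W}_x$ with $\st(x,\mathcal{W}_x)\subset V_{i(x)}$ does not produce a single $\mathcal{W}$ that works uniformly in $x$, even when $i$ takes only finitely many values. Concretely, take $X=\RR$ with its standard metric ss-structure (which certainly induces the usual topology) and $f(x)=\sin(x^2)$. The two-element open cover $\{f>-\tfrac12\}$, $\{f<\tfrac12\}$ carries an obvious finite continuous PoU (push forward a PoU on $[-1,1]$ along $f$), so it lies in family~(1); but its Lebesgue number is zero, since near $x=\sqrt{2\pi k+\pi/2}$ the nearest point with $f\le -\tfrac12$ is at distance of order $k^{-1/2}$. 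Hence this cover is \emph{not} an s-scale for the metric uniformity, and family~(1) is not contained in every ss-structure inducing $\mathscr{T}$. The paper's own proof is no more detailed at this point, and the paragraph immediately following the proposition in fact retreats to the correct weaker assertion that (1) is contained in every ss-structure for which all continuous $f:X\to\RR$ are ss-continuous.
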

\begin{proof}
As described in \cite{DyPU}, each partition of unity $\phi$ has its derivative being subordinated to a cover that is a star-refinement (in the pointwise sense) of the cover of supports $\mathcal{U}$ of $\phi$. Therefore the second derivative of $\phi$ is subordinated to a cover that star-refines $\mathcal{U}$.

Notice that $\mathscr{T}$ being completely regular means exactly that for any open set $U$ and any $x\in U$ there is a finite partition of unity on $X$
containing a function $f$ with support in $U$ and $f(x)\ne 0$. That proves the last part of the proposition.
\end{proof}

\begin{Corollary}\label{para}
A topological Hausdorff space $X$ is paracompact if and only if the collection of open covers of $X$ forms a base for an ss-structure on $X$, and that ss-structure generates the original topology on $X$.
\end{Corollary}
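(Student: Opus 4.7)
The plan is to deduce both directions from A.~H.~Stone's classical characterization of paracompactness for Hausdorff spaces: $X$ is paracompact if and only if every open cover of $X$ admits an open star-refinement. Once Stone's theorem is in hand, the corollary becomes essentially a translation between the two languages, and Proposition \ref{ssAndPUs} takes care of the topology-matching clause.

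For the forward implication, assume $X$ is paracompact Hausdorff. Then $X$ is normal, hence completely regular, so Proposition \ref{ssAndPUs} gives a largest ss-structure $\mathscr{SS}_{\max}$ inducing $\mathscr{T}$, consisting of all covers admitting a subordinated continuous partition of unity. The classical result that every open cover of a paracompact Hausdorff space has a subordinated partition of unity (which follows from the shrinking lemma plus Urysohn's lemma, or by iterated barycentric subdivision as in Proposition \ref{PUs}) places every open cover into $\mathscr{SS}_{\max}$. So open covers form a subfamily closed under coarsening; Stone's theorem then supplies, for each open cover $\mathcal{U}$, an open star-refinement $\mathcal{V}$, which is exactly the filter-base axiom (2) of Definition \ref{UniformDef}. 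Hence open covers form a base of an ss-structure, and since each open cover is in $\mathscr{SS}_{\max}$ and open covers are a base, the induced topology coincides with $\mathscr{T}$.

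For the backward implication, assume the collection of open covers forms a base for an ss-structure inducing the original topology. Given any open cover $\mathcal{U}$ of $X$, the base property (clause (2) of Definition \ref{UniformDef} applied with $\mathcal{U}_1 = \mathcal{U}_2 = \mathcal{U}$) yields an open cover $\mathcal{V}$ with $\mathrm{st}(\mathcal{V},\mathcal{V})$ refining $\mathcal{U}$. Thus every open cover admits an open star-refinement, and $X$ Hausdorff combined with Stone's theorem gives paracompactness.

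The only potentially subtle point I anticipate is verifying, in the forward direction, that the construction of subordinated partitions of unity on a paracompact Hausdorff space really does place every open cover into the ss-structure from Proposition \ref{ssAndPUs}; this is standard but should be cited rather than re-derived. The reliance on Stone's theorem itself is the main external input, and nothing beyond it seems needed.
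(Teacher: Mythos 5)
Your route is genuinely different from the paper's. The paper states Corollary \ref{para} so that it falls out of its partition-of-unity machinery: Proposition \ref{PUs} produces a continuous partition of unity subordinated to any open s-scale, and the proof of Proposition \ref{ssAndPUs} (the derivative trick from \cite{DyPU}) converts a subordinated partition of unity into an open (cozero) cover star-refining the given one; paracompactness then enters only through the classical facts that paracompact Hausdorff spaces admit partitions of unity subordinated to every open cover and, conversely, that a Hausdorff space with this property is paracompact. You instead invoke Stone's theorem (paracompact Hausdorff $=$ fully normal Hausdorff), which turns the corollary into a translation exercise: the filter-base axiom 2) of Definition \ref{UniformDef} for the family of open covers is exactly full normality (for two covers $\mathcal{U}_1,\mathcal{U}_2$ one star-refines the common open refinement $\{A\cap B: A\in\mathcal{U}_1,\ B\in\mathcal{U}_2\}$, a point you elide but which is immediate). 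Both arguments are correct; yours is shorter and needs no partitions of unity at all in the backward direction, but it imports Stone's theorem wholesale, whereas the paper's version stays inside the framework it is advertising and makes the corollary a genuine consequence of \ref{PUs} and \ref{ssAndPUs}.

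One step of your forward direction is incomplete as written. Knowing that every open cover lies in $\mathscr{SS}_{\max}$ only shows that the topology induced by the ss-structure generated by the open covers is \emph{coarser} than $\mathscr{T}$; it does not give the reverse inclusion. To finish, take $x\in U$ with $U\in\mathscr{T}$ and star-refine the open cover $\{U,\ X\setminus\{x\}\}$ (Hausdorff gives $T_1$, so this is an open cover): if $\mathcal{W}$ is an open star-refinement, then $st(x,\mathcal{W})$ is contained in a member of that cover containing $x$, hence $st(x,\mathcal{W})\subset U$, which is exactly what membership of $U$ in the induced topology requires. Alternatively, use regularity of paracompact Hausdorff spaces and the two-set cover $\{U,\ X\setminus cl(W)\}$ for a suitable open $W$ with $x\in W\subset cl(W)\subset U$. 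Either patch is one line; without it the clause ``that ss-structure generates the original topology'' is not fully verified.
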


 Traditionally, a topological space is said to be \textbf{uniformizable} if there exists a small scale structure on the space which induces the topology of the space. Thus, it must be completely regular. Conversely, if $X$ is a completely regular topological space, then \ref{ssAndPUs} says there is a small scale structure $\mathscr{SS}$ on $X$ which generates its topology. Note that $\mathscr{SS}$ is contained in every ss-structure in which all continuous functions $f: X \to \RR$ are ss-continuous. 
 
 \ref{ssAndPUs} implies that a compact Hausdorff space has a unique ss-structure inducing its topology. For metric spaces, having a unique small scale structure is equivalent to being compact, but in general, having a unique ss-structure only implies that the space is locally compact; see \cite{Do}. The long line
is a good example of a completely regular space with unique ss-structure inducing its topology since it has only one compactification.

Using \ref{ssAndPUs} one can see that, for a given topological space $X$, the small scale structures on $X$ inducing its topology are in one to one correspondence with compactifications $h(X)$ of $X$. In one direction there is the smallest compactification (called \textbf{Samuel compactification} - see \cite{J}) over which all ss-continuous functions from $X$ to $[0,1]$ extend continuously. In the other direction the compactification $h(X)$ has a unique ss-structure inducing its topology and that ss-structure is restricted to $X$.

\subsection{Large Scale Structures}

The following definition of a large scale structure is a dualization of the definition of a small scale structure on a set.

\begin{Definition} \cite{DH}
A \textbf{large scale structure} on a set $X$ (ls-structure for short) is a nonempty filter of scales on $X$
(in the order being the reverse star refinement) along with all refinements of those scales.

Alternatively, it is a collection $\mathscr{LS}$ of covers of subsets of $X$ satisfying the following property: if $\mathcal{B}_1$, $\mathcal{B}_2 \in \mathscr{LS}$ then $st(\mathcal{B}_1,\mathcal{B}_2) \in \mathscr{LS}$. 

If a family $\mathcal{B_1}$ can be coarsened to $\mathcal{B_2} \in \mathscr{LS}$, then we say that $\mathcal{B_1}$ is \textbf{uniformly bounded}.
Notice that $\mathcal{B}_1 \in \mathscr{LS}$ implies $\mathcal{B}_2$ is uniformly bounded if each nonsingleton element of $\mathcal{B}_2$ is contained in some element of $\mathcal{B}_1$.
\end{Definition}

Given a uniformly bounded family $\mathcal{B}$ we define the \textbf{trivial extension} of $\mathcal{B}$ to be $\mathcal{B} \cup \{\{x\}:x\in X\}$. The trivial extension of any uniformly bounded family is an l-scale, so although a uniformly bounded family may not be a cover of the underlying space, any such family may be extended to be a scale.

A map $f: X \to Y$ between spaces with large scale structures  is called \textbf{large scale continuous} (or \textbf{bornologous}) if the image of any uniformly bounded family in $X$ is uniformly bounded in $Y$.

\begin{Remark}
 In analogy to topological spaces, i.e. sets with a topology, we can talk about 
 \textbf{small scale spaces} (ss-spaces) or \textbf{large scale spaces} (ls-spaces), i.e. sets with structures attached to them.
\end{Remark}

\subsection{Metric Scale Structures}

Each pseudometric space $(X, p)$ induces a small scale structure (called the \textbf{metric small scale structure}) by taking as scales the collections $\mathcal{B}_r = \{B(x,r):x\in X\}$ for $r>0$. This structure is perhaps the most important scale structure. A metric small scale structure includes all $\mathscr{B}_r$ along with any cover which coarsens a $\mathcal{B}_r.$ Thus, this structure contains all covers with positive Lebesgue number
(see \ref{Lebesgue number of a scale}). Notice that these covers can become arbitrarily large, so long as they have large overlap. From the small scale point of view, all that is important is that the cover has some thickness, not that it is bounded.

Notice that the metric small scale structure is Hausdorff if and only if $p$ is a metric.

A metric large scale structure consists of all $\mathcal{B}_r$ along with all covers which refine a $\mathcal{B}_r$. The idea is that anything which is smaller than a uniformly bounded collection is uniformly bounded. Notice that these collections can become arbitrarily thin, because from the large scale point of view, the only thing important is that there is a bound on the mesh of the cover.

In a metric space, the distance function gives a natural way to measure the scale of a cover. Namely, one can say how \lq thick\rq\ a cover is by using the Lebesgue number. We will redefine the Lebesgue number of a cover using the concept of larger/smaller scale. 
\begin{Definition}\label{Lebesgue number of a scale}
The \textbf{Lebesgue number of a scale} $\mathcal{U}$ is the supremum of the set of real numbers $\lambda$ such that the collection of $\lambda$-balls of the space is a smaller scale than $\mathcal{U}$. 
\end{Definition}
From this point of view, the Lebesgue number lemma says that all open covers of a compact metric space have positive thickness. Another meaning of it is that
all metrics inducing the same compact topology are small scale equivalent.

The Lebesgue number is a small scale concept. Dual to the Lebesgue number is the large scale notion of the \textbf{mesh} of a cover. 
\begin{Definition}
The \textbf{mesh of a scale} $\mathcal{U}$ is the infimum of the real numbers $M$ such that  $\mathcal{U}$ is smaller than the scale of $M$-balls. 
\end{Definition}

\begin{Remark}
 Notice our definitions of the Lebesgue number and of mesh differ from the traditional ones,
 as they are designed to reflect the order on scales. However,
 they are bi-Lipschitz equivalent to them which is all that matters.
\end{Remark}
Notice that mesh is exactly dual to the notion of Lebesgue number. The Lebesgue number and the mesh of a cover both quantify the scale of that cover, and both are determined by comparing covers using star refinement. Small scale and large scale structures on a space give a way to extend this notion of using star refinements to determine comparison of scales to a more general class of spaces.

\subsection{Translation Structures}\label{translation}

Let $G$ be a group. There are two well known ways of creating invariant scale structures on $G$, both of which are induced by translations. The first corresponds to the uniform (small scale) structure on $G$, and the other is a generalization of the coarse (large scale) structure induced by the Cayley graph of $G$.

 If $G$ is a topological group then there exists a neighborhood base of the identity element, $\{U_{\alpha}:\alpha \in A\}$, where $A$ is directed by square inclusion; i.e., $\alpha \ge \beta$ if and only if $U_{\alpha} \cdot U_{\alpha} \subset U_{\beta}$. This allows us to create a small scale structure on $G$ by declaring the base scales of $G$ to be $\mathcal{U}_{\alpha} = \{gU_{\alpha}:g\in G\}$. 
 
 If $G$ is a locally compact topological group, then considering scales of $G$ of the form $\mathcal{U}_{\alpha} = \{gU_{\alpha}:g\in G\}$, where $U_\alpha$ is a pre-compact  (i.e. $cl(U_\alpha)$ is compact) neighborhood of $1_G$ yields a large scale structure on $G$. 
If $G$ is any group, then we can put the discrete topology on it and the above large scale structure on $G$ has scales given by the collections $\mathcal{U}_F = \{ gF: g\in G\}$ where $F$ ranges over the finite subsets of $G$. It is known, see \cite{DH} and \cite{BDM}, that this scale structure is large scale equivalent to the metric scale structure induced by the Cayley graph in the case when $G$ is a finitely generated group. The advantage of this approach, however, is that one does not need to define a Cayley graph or restrict to countable groups in order to use proper metrics.

It is notable that the concept of a translation enables one to extend the methods of geometric group theory to metric spaces. The translation algebra (or Uniform Roe Algebra) of a metric space generalizes the action of a group on its space of characters and allows one to do representation theory and Fourier analysis on metric spaces. We view translations as a fundamental tool for connecting concepts.

\subsection{Entourages Approach To Scale Structures}\label{ent}
One may also define structures on set $X$ using subsets of the product space $X \times X.$ Indeed,
 closely related to scales are \textbf{entourages/controlled sets}.
\begin{Definition}
  For a space $X$, an \textbf{entourage} is a subset of $X \times X$ containing the diagonal, which is the set $\{(x,x): x \in X\}.$ A collection of entourages forms an \textbf{entourage structure}.
\end{Definition}

For a set $X$, the \textbf{diagonal} of $X$ is defined to be $\Delta = \{(x, x): x \in X\}.$ For a subset $U \subseteq X \times X$, the \textbf{inverse} of $U$ is defined to be $U^{-1} = \{(y, x) : (x, y) \in U\}.$ For two sets $U, V \subseteq X \times X,$ the \textbf{composition} (or the \textbf{product}) of $U$ and $V$ is defined as $U \circ V = \{(x,z) \mid (x,y) \in U$ and $(y,z) \in V$ for some $y \in X\}.$ For a set $E \subseteq X \times X$ and $x \in X$, let $E[x] = \{y \in X : (y,x) \in E\}.$

The above approach to scales on a group $G$ can be generalized to devise a general scheme of switching between scales on a set $X$ and entourages, i.e. subsets of $X\times X$ containing the diagonal. Namely, given an entourage $E$ in $X$, thought of as a neighborhood of the identity function on $X$, one can create its translates $g\circ E$, $g:X\to X$, and that leads to a scale on $X^X$.
In turn, that scale, when restricted to $X^\ast$ (the set of functions from a single point to $X$), gives a scale on $X$. Conversely, given a scale $\mathcal{U}$ on $X$, one can consider the entourage $\bigcup\limits_{U\in \mathcal{U}} U\times U$.

One can similarly define an order relation on entourages using composition of entourages, see section \ref{ent} or \cite{DH} for instance. That leads to the coarse structure in the sense of Roe \cite{Roe} and to emphasize that one deals with large scale point of view we use the terminology of \textbf{controlled sets} rather than entourages.
We will be doing most of our work with scales. It is already known that the entourage approach and covering approach lead to isomorphic categories, see \cite{DH}.

\begin{Definition}
A \textbf{uniform structure} on a set $X$ is a collection $\mathcal{U}$ of subsets of $X \times X$ satisfying

1) $\Delta  \subseteq U$ for all $U \in \mathcal{U}$;

2) if $U \in \mathcal{U}$, then $U^{-1} \in \mathcal{U}$;

3) for every $U \in \mathcal{U}$, there is a $V \in \mathcal{U}$ such that $V \circ V \subseteq U$;

4) if $U \in \mathcal{U}$ and $U \subseteq V$, then $V \in \mathcal{U}$;

5) if $U, V \in \mathcal{U},$ then $U \cap V \in \mathcal{U}$.

The elements of a uniform structure are called \textbf{entourages}.
 \end{Definition}

For a pseudometric space $(X,p)$, the metric uniform structure consists of all sets $E \subseteq X \times X$ which contain a set of the form $\{(a,b) : p(a, b) < r\}$ for some $r>0.$

\begin{Definition}\cite{Roe}
A \textbf{coarse structure} on a set $X$ is a collection $\mathcal{U}$ of subsets of $X \times X$ satisfying

1) $\Delta \in \mathcal{U}$;

2) if $U \in \mathcal{U}$, then $U^{-1} \in \mathcal{U}$;

3) if $U, V \in \mathcal{U},$ then $U \circ V \in \mathcal{U}$;

4) if $U \in \mathcal{U}$ and $V \subseteq U$, then $V \in \mathcal{U}$;

5) if $U, V \in \mathcal{U}$, then $U \cup V \in \mathcal{U}.$

The elements of a coarse structure are called \textbf{controlled sets}.
\end{Definition}

For an $\infty$-metric space $(X,d)$, the metric coarse structure consists of all $E \subseteq X \times X$ such that $\sup\{d(x,y): (x,y) \in E\} < M$ for some $M >0.$ The metric coarse structure is also called the \textbf{bounded coarse structure} associated to $d.$

It is known, see \cite{DH}, that coarse structures and ls-structures are equivalent concepts. 

In terms of entourages, we define a small scale entourage base as a collection $\mathcal{B}$ of symmetric entourages  such that if $E, F \in \mathcal{B}$, then there exists $G \in \mathcal{B}$ such that $G \circ G \subseteq E\cap F.$ To make a small scale entourage base into a uniform structure, add all supersets of elements of $\mathcal{B}.$
Similarly, a large scale entourage base is a collection $\mathcal{B}$ of symmetric entourages such that if $E, F \in \mathcal{B}$, then there exists $G\in \mathcal{B}$ such that $E\circ F \subseteq G$. To make a large scale entourage base into a coarse structure, add all subsets of elements of $\mathcal{B}.$

\section{Hybrid structures}
Ls-structures and ss-structures are at opposite sides of a spectrum of scale structures. To bridge the gap, we introduce intermediate structures: hybrid small scale structures and hybrid large scale structures.

\subsection{Bounded Structures}

A space with a large scale structure has a natural collection of bounded sets; namely, the bounded subsets are those which are contained in an element of some l-scale. In order to further explore connections between large and small scales, we introduce hybrid small scale spaces, which are spaces with a small scale structure along with a collection of bounded subsets. This collection cannot be an arbitrary collection of subsets, but must satisfy certain properties.

\begin{Definition}
A \textbf{bounded structure} for a set $X$ is a collection $\mathcal{B}$ of subsets of $X$ satisfying the following properties:

1) $\mathcal{B}$ contains all singleton subsets of $X$;

2) if $A\in \mathcal{B}$ and $B \subseteq A$, then $B \in \mathcal{B}$;

3) if $B_1, B_2 \in \mathcal{B}$ and $B_1 \cap B_2 \neq \emptyset,$ then $B_1 \cup B_2 \in \mathcal{B}.$\\
Condition 1) may be replaced by stating that $\mathcal{B}$ covers $X$.
\end{Definition}

Clearly, the set of  bounded sets associated to a large scale structure on a set $X$ satisfies the conditions to be a bounded structure for $X$, but it turns out that any bounded structure for a space $X$ is the set of bounded sets for some large scale structure on $X$ (\cite{ADH1} Proposition 4.1). The smallest ls-structure among them consists of all scales $\mathcal{U}$ with the property that $st(B,\mathcal{U})$ is bounded for all bounded sets $B$.

Notice that the fact of large scale structures inducing bounded structure is dual to the fact of small scale structures inducing a topology. Indeed, $B$ being bounded means that for each $x\in B$ there is an l-scale $\mathcal{U}$ with $st(x,\mathcal{U})$ containing $B$. Thus bounded sets of an ls-structure are dual to open sets of an ss-structure.

Other examples of bounded structures on a space $X$ include the collection of all subsets of $X$, the collection of all finite subsets, the collection of finite diameter subsets if $X$ is metric, and the collection of all pre-compact subsets if $X$ has a topology. A bounded structure naturally partitions a space into pieces which are, informally speaking \lq infinitely far away\rq\ - our next definition makes it more precise.

\begin{Definition}
Given a bounded structure $\mathcal{B}$ on a space $X$, define an equivalence relation $\sim$ on $X$ by $x \sim y$ if $\{x, y\} \in \mathcal{B}.$ The equivalence classes of $\sim$ are called the \textbf{$\mathcal{B}$-components} of $X$. A space is \textbf{anti-Hausdorff} if it has exactly one $\mathcal{B}$-component.
\end{Definition}

For example, in an $\infty$-metric space, points $x$ and $y$ are in the same $\mathcal{B}$-component iff $d(x, y) < \infty$ ($\mathcal{B}$ being the standard family of bounded subsets of $X$).

The concept of $X$ being anti-Hausdorff is dual to an ss-structure being Hausdorff as follows: in Hausdorff ss-structures for any two different points $x,y\in X$ one can zoom in to distinguish between $x$ and $y$ using a scale. In anti-Hausdorff spaces, given any $x,y\in X$, one can zoom out so that $x$ and $y$ belong to the same element of a scale.

\begin{Definition} Suppose $X$ has a bounded structure.
A set $B \subseteq X$ is called \textbf{weakly bounded} if for every component $C$ of $X$, the set $B \cap C$ is bounded.
\end{Definition}

Notice that the union of finitely many weakly bounded sets is again weakly bounded since the union of two bounded subsets of a component is bounded.

\begin{Definition}
A map between two spaces with bounded structures is \textbf{proper} if the inverse image of each bounded set is bounded.
\end{Definition}

\begin{Lemma}\label{WB}
If $h:X \to Y$ is a map between two spaces with bounded sets which is proper and maps bounded sets to bounded sets, then the inverse image of a weakly bounded subset of $Y$ is weakly bounded in $X$. In particular, if $X$ and $Y$ are large scale spaces, and $h: X \to Y$ is a proper, large scale continuous map, then $h^{-1}(W)$ is weakly bounded in $X$ for every weakly bounded $W \subseteq Y$.
\end{Lemma}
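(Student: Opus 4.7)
The plan is to prove the general statement first and then deduce the ``in particular'' clause by showing that large scale continuous maps automatically send bounded sets to bounded sets.

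The key observation is that under the hypotheses on $h$, the image of any $\mathcal{B}$-component of $X$ is contained in a single $\mathcal{B}$-component of $Y$. Indeed, fix a component $D$ of $X$ and pick a basepoint $x_0 \in D$. For any $x \in D$, the pair $\{x_0, x\}$ is bounded (by definition of $\sim$), so its image $\{h(x_0), h(x)\}$ is bounded in $Y$ because $h$ sends bounded sets to bounded sets. This means $h(x) \sim h(x_0)$ in $Y$, so $h(D)$ lies in the component $C$ of $Y$ containing $h(x_0)$.

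Given this, fix a weakly bounded $W \subseteq Y$ and a component $D$ of $X$, and let $C$ be the component of $Y$ with $h(D) \subseteq C$. Then
\[
h^{-1}(W) \cap D \;\subseteq\; h^{-1}(W \cap C).
\]
Since $W$ is weakly bounded, $W \cap C$ is bounded in $Y$; properness of $h$ makes $h^{-1}(W \cap C)$ bounded in $X$; and closure of the bounded structure under subsets (axiom 2) shows that $h^{-1}(W) \cap D$ is bounded. As $D$ was arbitrary, $h^{-1}(W)$ is weakly bounded.

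For the ``in particular'' clause I would verify that if $X$ and $Y$ are large scale spaces and $h$ is bornologous, then $h$ sends bounded sets to bounded sets: if $B \subseteq X$ is bounded, then $B$ is contained in some member of an l-scale, so the family $\{B\}$ is uniformly bounded, hence $\{h(B)\}$ is uniformly bounded in $Y$, and $h(B)$ is therefore contained in some element of an l-scale of $Y$, i.e.\ bounded. With this observation in hand, the general statement applies directly to give the second assertion. No step here looks hard; the only point that requires care is the first observation about components, since everything else is formal manipulation of the axioms.
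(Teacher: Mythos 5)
Your proof is correct and follows essentially the same route as the paper: both arguments reduce to the observation that two points in a common component of $X$ have bounded image pairs, hence land in a single component of $Y$, after which properness and closure under subsets finish the job. Your explicit verification that bornologous maps send bounded sets to bounded sets is a small addition the paper leaves implicit, but it does not change the approach.
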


\begin{proof}
Let $W \subseteq Y$ be weakly bounded. Fix a component $C$ of $X$. Suppose $D_1$ and $D_2$ are components of $Y$ such that $h^{-1}(B \cap D_i) \subseteq h^{-1}(B) \cap C$ for $i = 1, 2.$ Say $x \in h^{-1}(B \cap D_1)$ and $y \in h^{-1}(B \cap D_2).$ Then $\{x, y\}$ is bounded, implying that $\{h(x), h(y)\}$ is bounded in $Y$. Thus, $D_1 \cap D_2 \neq \emptyset$, which means that $D_1 = D_2.$ Thus, $h^{-1}(B) \cap C = h^{-1}(B \cap D_1)$ is bounded by the properness of $h.$
\end{proof}

The assumption that $h$ maps bounded sets to bounded sets is needed in the above lemma. Consider, for example the identity map from $\ZZ$ to $\ZZ$, where the domain has the standard metric and the codomain has the $\infty$-metric where any two points are infinitely far apart. Then $\ZZ$ is weakly bounded in the codomain but not in the domain.

\begin{Lemma}
 Suppose $(X,\mathscr{LS})$ is a large scale space. If $B$ is weakly bounded in $X$ and $\mathcal{U}$ is
 uniformly bounded, then $st(B,\mathcal{U})$ is weakly bounded.
\end{Lemma}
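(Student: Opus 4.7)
The plan is to fix an arbitrary $\mathcal{B}$-component $C$ of $X$ and show that $\st(B,\mathcal{U})\cap C$ is bounded. The key structural observation I would establish first is this: every $U\in\mathcal{U}$ is contained in a single $\mathcal{B}$-component. Indeed, since $\mathcal{U}$ is uniformly bounded, each $U\in\mathcal{U}$ is a bounded subset of $X$, so any two points $x,y\in U$ satisfy $\{x,y\}\in\mathcal{B}$, which means $x\sim y$.

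The payoff of this observation is that stars localize to components. Specifically, if $U\in\mathcal{U}$ meets both $B$ and $C$, then $U\subseteq C$ (by the observation above), and hence $U$ already meets $B\cap C$. Consequently
\[
\st(B,\mathcal{U})\cap C \;=\; \st(B\cap C,\mathcal{U}_C),
\]
where $\mathcal{U}_C=\{U\in\mathcal{U}:U\subseteq C\}$ is again uniformly bounded (as a subfamily of a uniformly bounded family).

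Now $B\cap C$ is bounded because $B$ is weakly bounded. To finish, I would invoke the standard fact that the star of a bounded set with respect to a uniformly bounded scale is bounded: pick an l-scale $\mathcal{V}$ coarsening $\mathcal{U}$ and a bounded set $V$ with $B\cap C\subseteq V$ (enlarging $V$ if needed so that $V\in\mathcal{V}$, or using that $\{V\}\cup\{\{x\}:x\notin V\}$ is uniformly bounded). Then $\st(B\cap C,\mathcal{U}_C)\subseteq\st(V,\mathcal{U})$, and $\st(V,\mathcal{U})$ is an element of the uniformly bounded cover $\st(\{V\}\cup\{\{x\}\},\mathcal{U})$, hence bounded. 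Therefore $\st(B,\mathcal{U})\cap C$ is bounded, and since $C$ was arbitrary, $\st(B,\mathcal{U})$ is weakly bounded.

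The only mildly subtle step is the first observation, namely that uniform boundedness of $\mathcal{U}$ forces each member of $\mathcal{U}$ to live in a single component; the remainder is bookkeeping with the definitions of $\st$, bounded set, and weakly bounded set. No reliance on topology, Hausdorffness, or on $X$ being anti-Hausdorff is needed.
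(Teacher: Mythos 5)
Your proof is correct and follows essentially the same route as the paper, whose entire argument is the identity $st(B,\mathcal{U})\cap C = st(B\cap C,\mathcal{U})$ for each component $C$, together with the boundedness of stars of bounded sets. You simply make explicit the supporting observation (each $U\in\mathcal{U}$ lies in a single component) that the paper leaves implicit.
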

\begin{proof}
 Given a component $C$ of $X$, its intersection with $st(B,\mathcal{U})$ is $st(B\cap C,\mathcal{U})$
 which is bounded.
\end{proof}

Traditionally, slowly oscillating functions are real-valued. We are proposing a vast generalization of them. 

\begin{Definition}\label{SlowlyOscillatingDef}
 A function $f:X\to Y$ from an ls-space $X$ to an ss-space $Y$ is \textbf{slowly oscillating}
 if for any two scales, $\mathcal{U}$ on $X$ and $\mathcal{V}$ on $Y$, there is a weakly bounded set $B$ in $X$
 such that $f(U)$ is contained in an element of $\mathcal{V}$ for any $U\in \mathcal{U}$ intersecting $X\setminus B$.
\end{Definition}

Slowly oscillating functions can be seen as a bridge connecting the large scale structure on the domain with the small scale structure of the codomain.

\begin{Proposition}\label{CharOfSlowlyOscillating}
  A function $f:X\to Y$ from an ls-space $X$ to an ss-space $Y$ is slowly oscillating
 if and only if for any two scales, $\mathcal{U}$ on $X$ and $\mathcal{V}$ on $Y$, there is a weakly bounded set $B$ in $X$
 such that $f(U\setminus B)$ is contained in an element of $\mathcal{V}$ for any $U\in \mathcal{U}$.
\end{Proposition}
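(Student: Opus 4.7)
The plan is to prove the two implications separately; the direction ``definition $\Rightarrow$ characterization'' is essentially immediate, while the reverse direction requires slightly enlarging the weakly bounded set using a star with respect to $\mathcal{U}$.

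For the forward direction, given scales $\mathcal{U}$ on $X$ and $\mathcal{V}$ on $Y$, I would take the weakly bounded $B$ supplied by Definition \ref{SlowlyOscillatingDef} applied to this very pair. For each $U\in\mathcal{U}$ there are two cases. If $U\subseteq B$, then $U\setminus B=\emptyset$ and $f(U\setminus B)$ is trivially contained in any element of $\mathcal{V}$. If $U$ meets $X\setminus B$, the slowly oscillating condition supplies $V\in\mathcal{V}$ with $f(U)\subseteq V$, whence $f(U\setminus B)\subseteq f(U)\subseteq V$. So the same $B$ witnesses the characterization.

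For the reverse direction, given $\mathcal{U}$ and $\mathcal{V}$, let $B$ be the weakly bounded set supplied by the characterization and set $B':=st(B,\mathcal{U})$. The immediately preceding lemma guarantees that $B'$ is weakly bounded, since any l-scale is uniformly bounded by definition. The key observation is that any $U\in\mathcal{U}$ meeting $X\setminus B'$ must be disjoint from $B$: otherwise $U\cap B\ne\emptyset$ would force $U\subseteq st(B,\mathcal{U})=B'$, contradicting the choice of $U$. Hence for such a $U$ we have $U\setminus B=U$, so the characterization yields $V\in\mathcal{V}$ with $f(U)=f(U\setminus B)\subseteq V$, which is precisely the slowly oscillating condition with weakly bounded set $B'$.

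I do not anticipate a serious obstacle; the argument is really just a careful matching of quantifiers, the only nontrivial move being the passage $B\rightsquigarrow st(B,\mathcal{U})$. The point to double-check is that the preceding lemma genuinely applies here, i.e.\ that the l-scale $\mathcal{U}$ qualifies as ``uniformly bounded'' in the sense of that lemma's hypotheses; this is immediate from the definitions, since l-scales are by construction uniformly bounded.
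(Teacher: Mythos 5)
Your proof is correct and matches the paper's argument essentially verbatim: the forward implication is immediate with the same weakly bounded set $B$, and the reverse implication is exactly the paper's move of replacing $B$ by $B'=st(B,\mathcal{U})$, invoking the preceding lemma for weak boundedness of $B'$ and observing that $U$ meeting $X\setminus B'$ forces $U\cap B=\emptyset$, hence $U=U\setminus B$. Your closing check that the l-scale $\mathcal{U}$ is uniformly bounded is indeed immediate, so there is nothing to fix.
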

\begin{proof}
 Notice that the condition in \ref{CharOfSlowlyOscillating} is weaker than in Definition \ref{SlowlyOscillatingDef}.
 To proceed in the other direction replace $B$ by $B':=st(B,\mathcal{U})$
 and notice that $U$ intersecting $X\setminus B'$ means $U\cap B=\emptyset$ in which case $U=U\setminus B$.
 \end{proof}

\subsection{Hybrid small scale structures}

\begin{Definition}
 A \textbf{hybrid small scale space} $X$ (an hss-space for short) is an ss-space with a bounded structure $\mathcal{B}$.
 $X$ is a \textbf{proper hss-space} if there is an s-scale $\mathcal{U}$ such that $st(B,\mathcal{U})\in \mathcal{B}$
 for all $B\in \mathcal{B}$.
 
 A function $f:X\to Y$ of hss-spaces is \textbf{hss-continuous} if it is small scale continuous and $f(B)$ is weakly bounded in $Y$ for all weakly bounded sets $B$ in $X$.
\end{Definition}

\begin{Remark}
The collection of all compact subsets of a small scale structure can always be added to the collection of bounded subsets without changing the hss-continuity of a function. This follows from the fact that the image of a compact set is compact. 
\end{Remark}

\subsection{Hybrid large scale structures}
To define a hybrid small scale space, we took a large scale concept, that of bounded sets, and attached it to a small scale space. We can also define a hybrid large scale space by dualizing this process; that is, attaching a small scale notion to a large scale space. Associated to any small scale structure on a space $X$ is a natural topology. Hence, to define a hybrid large scale space, we attach a topology to a large scale structure.

\begin{Definition}
 A \textbf{hybrid large scale space} $X$ (an hls-space for short) is an ls-space with a topology.
 $X$ is a \textbf{proper hls-space} if its bounded sets are identical with pre-compact subsets (as determined by the topology) and there is an open cover of $X$ that is uniformly bounded.
 
 A function $f:X\to Y$ of hls-spaces is \textbf{hls-continuous} if it is continuous and ls-continuous.
\end{Definition}

\section{C*-algebras and small scale structures}
Recall that $C^b(X)$ is the C*-algebra of bounded functions from a set $X$ to complex numbers $\mathbb{C}$.

\begin{Definition}
Suppose $X$ is a set.
Given a C*-subalgebra $\mathscr{C}$ of $C^b(X)$, the small scale structure on $X$ generated by $f^{-1}(\mathcal{U})$, $f\in \mathscr{C}$ and $\mathcal{U}$ a small scale on complex numbers, is denoted by $SS(\mathscr{C})$.
\end{Definition}

\begin{Remark}
Another way to describe $SS(\mathscr{C})$ is by creating the smallest topology on $X$ for which all $f\in \mathscr{C}$ are continuous, taking the smallest compactification of $X$ such that all $f\in \mathscr{C}$ extend over it, taking the unique ss-structure on that compactification, and then restricting it to $X$.
\end{Remark}

\begin{Definition}
Suppose $(X,\mathscr{SS})$ is a small scale space.
The C*-subalgebra of $C^b(X)$ consisting of all bounded and ss-continuous functions on $X$ is denoted by $C(X,\mathscr{SS})$.
\end{Definition}

The following should be regarded as a generalization of the Stone-Weierstrass Theorem (see \cite{En} or \cite{GJ}).
\begin{Theorem}\label{GenStoneWeierstrass}
If $\mathscr{C}$ is a unital C*-subalgebra of $C^b(X)$, then 
$C(X,SS(\mathscr{C}))=\mathscr{C}$.
\end{Theorem}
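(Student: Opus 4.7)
The inclusion $\mathscr{C} \subseteq C(X, SS(\mathscr{C}))$ is immediate from the construction: for every $f \in \mathscr{C}$ and every small scale $\mathcal{U}$ on $\mathbb{C}$, the pullback $f^{-1}(\mathcal{U})$ is by definition in $SS(\mathscr{C})$, so $f$ is ss-continuous. The substantive direction is the reverse inclusion, and my plan is to reduce it to a uniform Stone--Weierstrass approximation on a compact subset of $\mathbb{C}^n$, using that $\mathscr{C}$ is norm-closed.

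First I would translate into the entourage language of Section \ref{ent}. The uniform structure corresponding to $SS(\mathscr{C})$ is generated by the entourages $E_{f,\epsilon} = \{(x,y) \in X \times X : |f(x) - f(y)| < \epsilon\}$ for $f \in \mathscr{C}$ and $\epsilon > 0$, so it has a base consisting of finite intersections $\bigcap_{i=1}^n E_{f_i,\delta}$. Consequently, if $g \in C(X, SS(\mathscr{C}))$ is bounded and ss-continuous, then for every $\epsilon > 0$ there exist $f_1, \ldots, f_n \in \mathscr{C}$ and $\delta > 0$ such that
\[
\max_{1 \le i \le n} |f_i(x) - f_i(y)| < \delta \;\Longrightarrow\; |g(x) - g(y)| < \epsilon.
\]

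Next I would approximate $g$ by elements of $\mathscr{C}$. Set $F = (f_1, \ldots, f_n) \colon X \to \mathbb{C}^n$ and let $K = \overline{F(X)}$, which is compact because each $f_i$ is bounded. Fix any section $s \colon F(X) \to X$ of $F$ and define $\bar{g}_0 \colon F(X) \to \mathbb{C}$ by $\bar{g}_0(z) = g(s(z))$. The implication above shows, first, that $\bar{g}_0$ is uniformly continuous on $F(X) \subseteq \mathbb{C}^n$, so it extends to $\bar{g} \in C(K)$; second, taking $y = s(F(x))$ yields $|g(x) - \bar{g}(F(x))| < \epsilon$ for every $x \in X$. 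Now classical Stone--Weierstrass on the compact set $K$ provides a polynomial $P$ in $z_1, \bar{z}_1, \ldots, z_n, \bar{z}_n$ with $\|\bar{g} - P|_K\|_\infty < \epsilon$. Since $\mathscr{C}$ is a unital C*-subalgebra, $h := P(f_1, \overline{f_1}, \ldots, f_n, \overline{f_n}) \in \mathscr{C}$, and the triangle inequality gives $\|g - h\|_\infty < 2\epsilon$. Because $\epsilon > 0$ was arbitrary and $\mathscr{C}$ is norm-closed, $g \in \mathscr{C}$.

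The main delicate point is the entourage translation in the first step: one must verify that the uniform structure generated by the $E_{f,\epsilon}$ corresponds, under the equivalence of categories recalled in Section \ref{ent}, to the ss-structure $SS(\mathscr{C})$ generated by the pullback scales, and that its entourage base consists of finite intersections. Once this is in place, the rest is a textbook Stone--Weierstrass argument; in particular, possible non-injectivity of $F$ is sidestepped by working up to $\epsilon$-error instead of attempting an exact factorization of $g$ through $F$.
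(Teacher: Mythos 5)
Your overall route is genuinely different from the paper's: the paper extends an ss-continuous bounded $g$ over the compactification associated with $\mathscr{C}$ (after passing to the quotient on which $\mathscr{C}$ separates points) and quotes Stone--Weierstrass for compact Hausdorff spaces, whereas you reduce to polynomial approximation on a compact subset of $\mathbb{C}^n$; your entourage translation and the final closedness argument are fine. However, there is a genuine gap in the middle step: from the single implication $\max_i|f_i(x)-f_i(y)|<\delta\Rightarrow|g(x)-g(y)|<\epsilon$ you cannot conclude that $\bar g_0=g\circ s$ is uniformly continuous on $F(X)$, nor that it extends to $\bar g\in C(K)$. That implication controls the oscillation of $\bar g_0$ only at the one scale $\delta$ and only up to the one tolerance $\epsilon$; for smaller tolerances the ss-continuity of $g$ hands you a \emph{different} finite family from $\mathscr{C}$, which the fixed map $F$ does not control. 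For instance, let $X=\{a_k,b_k:k\in\NN\}$, $f(a_k)=1/k$, $f(b_k)=1/k+\eta_k$ with $\eta_k\to 0$ chosen so that $f$ is injective, and $g(a_k)=0$, $g(b_k)=\epsilon/2$: the displayed implication holds with $F=(f)$ and any $\delta>0$ (since $|g(x)-g(y)|\le\epsilon/2<\epsilon$ always), yet $\bar g_0$ jumps by $\epsilon/2$ between points of $F(X)$ at distance $\eta_k$, and it admits no continuous extension to $0\in K$. This does not contradict the theorem (there $g\in\mathscr{C}$ and a better choice of $F$ exists), but it shows the inference is a non sequitur -- and since whether $g\in\mathscr{C}$ is exactly what is being proved, you cannot assume you were handed a family making $\bar g_0$ uniformly continuous.

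The gap is local and repairable without changing your strategy: do not extend $\bar g_0$ at all, but build a continuous $\epsilon$-approximation on $K$ directly from the oscillation bound. Cover $K$ by finitely many open balls $B_1,\dots,B_m$ of radius $\delta/2$ that meet $F(X)$, pick $x_j\in X$ with $F(x_j)\in B_j$, take a continuous partition of unity $\{\lambda_j\}$ on $K$ subordinated to $\{B_j\}$, and set $h_0=\sum_{j=1}^m g(x_j)\lambda_j\in C(K)$. Whenever $\lambda_j(F(x))>0$ one has $\max_i|f_i(x)-f_i(x_j)|<\delta$, hence $|g(x)-g(x_j)|<\epsilon$, so $\sup_{x\in X}|g(x)-h_0(F(x))|\le\epsilon$. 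Now apply polynomial Stone--Weierstrass to $h_0$ on $K$, substitute $f_1,\overline{f_1},\dots,f_n,\overline{f_n}$ (this is where unitality enters, for the constant term), and conclude as you did, with total error $2\epsilon$ and norm-closedness of $\mathscr{C}$ finishing the proof. With this repair your argument is correct and self-contained modulo the covers-versus-entourages equivalence, which is a reasonable alternative to the paper's compactification argument.
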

\begin{proof} If $f\in \mathscr{C}$, then $f$ is ss-continuous with respect to $SS(\mathscr{C})$, so $f\in C(X,SS(\mathscr{C}))$.
Conversely, if $f:X\to \mathbb{C}$ is bounded and ss-continuous with respect to $SS(\mathscr{C})$, then $f$ extends over the compactification corresponding to $\mathscr{C}$ and one can use Stone-Weierstrass Theorem (see \cite{En}) to conclude $f\in \mathscr{C}$.
Technically, Stone-Weierstrass Theorem is traditionally stated for compact Hausdorff spaces. We can reduce our situation to that case by considering the quotient function $q:X\to Y$ which identifies all points $x$ and $y$ such that $f(x)=f(y)$ for all $f\in \mathscr{C}$.
Now, the ss-structure on $Y$ induced by $\mathscr{C}$ is Hausdorff and its compactification corresponding to $\mathscr{C}$ is compact Hausdorff, so Stone-Weierstrass Theorem applies.
\end{proof}

\section{C*-algebras and bounded structures}

Every C*-subalgebra $\mathscr{C}$ of $C^b(X)$ induces a bounded structure on $X$ as follows: 

\begin{Definition}\label{BddStructureInducedByCStar}
 $B$ is \textbf{bounded} if it is contained in $C\subset X$ with the property that, for any family of non-negative functions $\{f_s\}_{s\in S}$ in $\mathscr{C}$, if
$\sum\limits_{s\in S}f_s > 0$ on $C$, then there is a finite subset $F$ of $S$ so that $\sum\limits_{s\in F}f_s > 0$ on $C$.
\end{Definition}

\begin{Proposition}
Suppose $X$ is a set and $\mathscr{C}$ is a unital C*-subalgebra of $C^b(X)$.
The bounded structure on $X$ induced by $\mathscr{C}$ is identical with all pre-compact sets in the topology on $X$ induced by $SS(\mathscr{C})$.
\end{Proposition}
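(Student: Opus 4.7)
My plan is to identify $\mathscr{C}$ with $C(h(X))$ via Gelfand duality, where $h(X)$ is the compactification discussed immediately after Theorem \ref{GenStoneWeierstrass}. If $SS(\mathscr{C})$ is not Hausdorff I first quotient $X$ by $x \sim y$ iff $f(x) = f(y)$ for all $f \in \mathscr{C}$, exactly as in the proof of Theorem \ref{GenStoneWeierstrass}, and thereafter identify $X$ with its image in $h(X)$. Under this identification the topology on $X$ induced by $SS(\mathscr{C})$ is the subspace topology from $h(X)$, so $B \subseteq X$ is pre-compact precisely when $\overline{B}^{h(X)} \subseteq X$.

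For the easier direction (pre-compact implies bounded) I would take $C := \overline{B}^X$, which is compact by assumption, and verify Definition \ref{BddStructureInducedByCStar} directly: given non-negative $\{f_s\}_{s \in S} \subseteq \mathscr{C}$ with $\sum_{s \in S} f_s > 0$ on $C$, each $x \in C$ lies in some open set $\{f_{s(x)} > 0\}$ by continuity of $f_{s(x)}$, and compactness of $C$ extracts a finite subcover, giving the required finite $F \subseteq S$.

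For the reverse implication I would argue by contradiction. Suppose $B \subseteq C$ witnesses boundedness but $p \in \overline{C}^{h(X)} \setminus X$. For each $x \in C$, use normality of the compact Hausdorff space $h(X)$ to choose an open neighborhood $V_x$ of $p$ whose closure misses $x$, and apply Urysohn to obtain a non-negative $f_x \in C(h(X)) \cong \mathscr{C}$ with $f_x(x) = 1$ and $f_x \equiv 0$ on $\overline{V_x}$. Then $\sum_{x \in C} f_x > 0$ on $C$, so the bounded property yields a finite $F \subseteq C$ with $g := \sum_{x \in F} f_x > 0$ on $C$. The set $V := \bigcap_{x \in F} V_x$ is an open neighborhood of $p$ in $h(X)$ on which $g$ vanishes identically, and since $p \in \overline{C}^{h(X)}$ the set $V$ must meet $C$, contradicting $g > 0$ on $C$. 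Hence $\overline{C}^{h(X)} \subseteq X$, and since $\overline{B}^{h(X)} \subseteq \overline{C}^{h(X)}$ we conclude that $B$ is pre-compact.

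The key technical obstacle is arranging that each $f_x$ vanishes on an entire \emph{neighborhood} of $p$ rather than merely at $p$: a naive Urysohn argument giving only $f_x(p) = 0$ is insufficient, because the finite sum $g$ could approach zero along a net in $C$ tending to $p$ without ever actually attaining $0$, yielding no contradiction. This is precisely the step where one uses normality of $h(X)$ rather than merely complete regularity of the topology on $X$, and it is the place the unital hypothesis implicitly enters, since it is what guarantees a genuine compact Hausdorff Gelfand spectrum to work in.
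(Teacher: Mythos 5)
Your argument is correct, but the mechanics of the harder direction differ from the paper's. The paper also passes to the separated quotient and to the compactification $h(X)$, but then it proves that $C$ has the finite-subcover property directly: an arbitrary open family of $X$ covering $C$ is viewed as the restriction of an open family of $h(X)$, refined by cozero sets $\{f_s^{-1}(0,1)\}_{s\in S}$ with $f_s\in\mathscr{C}$ non-negative, and the boundedness condition then extracts a finite subfamily covering $C$; the easy direction is left as an exercise. You instead fix a hypothetical corona point $p\in \overline{C}^{h(X)}\setminus X$, build Urysohn functions $f_x$ that are $1$ at $x\in C$ and vanish on a whole neighborhood of $p$, and derive a contradiction from the finite subfamily, concluding $\overline{C}^{h(X)}\subseteq X$. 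Your route buys two things: it establishes compactness of the closure directly, whereas the paper's subcover property for $C$ (as opposed to $\overline{C}$) still needs the routine regularity argument to pass to compactness of the closure, a step the paper glosses over; and you verify the pre-compact $\Rightarrow$ bounded direction explicitly. Your observation that the $f_x$ must vanish on a neighborhood of $p$, not merely at $p$, is exactly the right technical point. The paper's route, by contrast, exhibits more transparently that the boundedness condition of Definition \ref{BddStructureInducedByCStar} is a cover-refinement statement, which is closer in spirit to how the rest of the paper manipulates scales. One small item both treatments leave implicit, and which you may wish to record in a final write-up, is that the quotient $q:X\to Y$ preserves both sides of the equivalence: a subset of $X$ is bounded (resp.\ pre-compact for the initial topology) if and only if its image in $Y$ is, which follows since every $f\in\mathscr{C}$ factors through $q$ and since $q^{-1}(K)$ is compact for compact $K\subseteq Y$.
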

\begin{proof} 
As in the proof of \ref{GenStoneWeierstrass} we can reduce the general case to that of $\mathscr{C}$ separating points of $X$.
Suppose a subset $C$ of $X$ has the property that for any family of non-negative functions $\{f_s\}_{s\in S}$ in $\mathscr{C}$ if
$\sum\limits_{s\in S}f_s > 0$ on $C$, then there is a finite subset $F$ of $S$ so that $\sum\limits_{s\in F}f_s > 0$ on $C$.
Any open family of $X$ covering $C$ can be seen as the restriction of an open family in the compactification $h(X)$ corresponding to $\mathscr{C}$ and that family can be refined by by an open family of the form $\{f_s^{-1}(0,1)\}_{s\in S}$ for some family of non-negative functions $f_s:X\to [0,1]$ belonging to $\mathscr{C}$. Since there is a finite subset $F$ of $S$ satisfying $\sum\limits_{s\in F}f_s > 0$ on $C$, the original open family of sets has a finite subfamily covering $C$.

The other direction is even simpler and is left as an exercise.
\end{proof}

\section{C*-algebras and large scale structures}

\begin{Definition}
Suppose $X$ is a set with a bounded structure $\mathscr{B}$.
Given a C*-subalgebra $\mathscr{C}$ of $C^b(X)$, a family $\mathcal{U}$ is declared to be \textbf{uniformly $(\mathscr{C},\mathscr{B})$-bounded} if the following conditions are satisfied:\\
1. $st(B,\mathcal{U})$ is weakly bounded for each weakly bounded set $B$.\\
2. for each weakly bounded set $B$, each $f\in \mathscr{C}$,
and each $\epsilon > 0$ there is a weakly bounded set $B'$ containing $B$ such that $f(U\setminus B')$ is of diameter at most $\epsilon$ for all $U\in \mathcal{U}$.
\end{Definition}

\begin{Lemma}
The family of uniformly $(\mathscr{C},\mathscr{B})$-bounded covers of $X$ forms a large scale structure on $X$ denoted by $LS(\mathscr{C},\mathscr{B})$.
\end{Lemma}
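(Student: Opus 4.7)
The plan is to verify the three properties that make $LS(\mathscr{C},\mathscr{B})$ a large scale structure: (a) it is nonempty, (b) it is closed under refinement, (c) it is closed under the star operation $st(\mathcal{U}_1,\mathcal{U}_2)$. For (a), the trivial cover $\{\{x\}:x\in X\}$ is uniformly $(\mathscr{C},\mathscr{B})$-bounded: $st(B,\{\{x\}\})=B$ is weakly bounded by hypothesis, and $f(\{x\}\setminus B')$ has diameter $0$ for every $B'$. For (b), if $\mathcal{V}$ refines $\mathcal{U}$, then each $V\in\mathcal{V}$ sits inside some $U\in\mathcal{U}$, so $st(B,\mathcal{V})\subseteq st(B,\mathcal{U})$ is weakly bounded, and $f(V\setminus B')\subseteq f(U\setminus B')$ has diameter at most $\epsilon$; thus both conditions pass to refinements.

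For property (c), consider $\mathcal{U}_1,\mathcal{U}_2\in LS(\mathscr{C},\mathscr{B})$ and write the typical element of $st(\mathcal{U}_1,\mathcal{U}_2)$ as $W=st(U_1,\mathcal{U}_2)$. To verify condition 1, I use the set-theoretic inclusion
\[
st(B,st(\mathcal{U}_1,\mathcal{U}_2))\subseteq st\bigl(st(st(B,\mathcal{U}_2),\mathcal{U}_1),\mathcal{U}_2\bigr),
\]
whose proof simply unpacks what it means for $st(U_1,\mathcal{U}_2)$ to meet $B$. Iterating condition 1 of $\mathcal{U}_2$, then $\mathcal{U}_1$, then $\mathcal{U}_2$ preserves weak boundedness at each stage, giving condition 1 for $st(\mathcal{U}_1,\mathcal{U}_2)$.

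Condition 2 is the hard step, and I plan to build the required weakly bounded set $B'$ in layers. Given weakly bounded $B$, $f\in\mathscr{C}$, and $\epsilon>0$, first choose $B_0\supseteq B$ weakly bounded so that $f(U_1\setminus B_0)$ has diameter at most $\epsilon/3$ for all $U_1\in\mathcal{U}_1$, then choose $B_1\supseteq B_0$ weakly bounded so that $f(U_2\setminus B_1)$ has diameter at most $\epsilon/3$ for all $U_2\in\mathcal{U}_2$. Next, use condition 1 of $\mathcal{U}_1$ and $\mathcal{U}_2$ to enlarge to $B_2:=B_1\cup st(B_1,\mathcal{U}_1)$ and $B':=B_2\cup st(B_2,\mathcal{U}_2)$; each is weakly bounded as a finite union of weakly bounded sets. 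For any $x\in W\setminus B'$, the element $U_2\ni x$ must satisfy $U_2\cap B_2=\emptyset$ (otherwise $U_2\subseteq st(B_2,\mathcal{U}_2)\subseteq B'$). Picking $p\in U_2\cap U_1$, the disjointness from $B_2$ forces $p\notin B_1$, whence $|f(x)-f(p)|\le\epsilon/3$ and also $p\in U_1\setminus B_0$. Repeating this for $y\in W\setminus B'$ with corresponding $U_2'\ni y$ and $q\in U_2'\cap U_1$, the diameter bound on $f(U_1\setminus B_0)$ yields $|f(p)-f(q)|\le\epsilon/3$, and the triangle inequality delivers $|f(x)-f(y)|\le\epsilon$.

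The main obstacle is arranging the auxiliary set $B'$ so that every $U_2$ contributing to some $W\setminus B'$ still contains a witness point in $U_2\cap U_1$ that lies outside $B_1$; the two extra star-enlargements $st(B_1,\mathcal{U}_1)$ and $st(B_2,\mathcal{U}_2)$ are precisely what pushes points far enough away from the relevant bounded sets so that the two diameter estimates from condition 2, applied separately to $\mathcal{U}_1$ and $\mathcal{U}_2$, can be chained via a common intersection point.
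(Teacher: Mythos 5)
Your proposal is correct and follows essentially the same route as the paper: closure under stars is established by the same $\epsilon/3$ chaining through nested weakly bounded sets, with the final weakly bounded set taken to be a star-enlargement $st(B_2,\mathcal{U}_2)$ and the two diameter estimates linked through a point of $U_1\cap U_2$. The only difference is that you spell out the weak-boundedness of $st(B,st(\mathcal{U}_1,\mathcal{U}_2))$ via an explicit triple-star inclusion and also check nonemptiness and refinement-closure, details the paper either omits or dismisses as easy.
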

\begin{proof}  Suppose $\mathcal{U}$ and $\mathcal{V}$ are two uniformly $(\mathscr{C},\mathscr{B})$-bounded covers of $X$.
Put $\mathcal{W}=st(\mathcal{U},\mathcal{V})$. It is easy to see that $st(B,\mathcal{W})$ is weakly bounded for each weakly bounded $B$.

Suppose $B$ is weakly bounded, $f\in \mathscr{C}$,
and $\epsilon > 0$.
 Pick a weakly bounded set $B_1$ containing $B$ such that $f(U\setminus B_1)$ is of diameter at most $\epsilon/3$ for all $U\in \mathcal{U}$.
  Pick a weakly bounded set $B_2$ containing $st(B_1,\mathcal{U})$ such that $f(V\setminus B_2)$ is of diameter at most $\epsilon/3$ for all $V\in \mathcal{V}$.
Notice that $f(st(U,\mathcal{V})\setminus st(B_2,\mathcal{V}))$ is of diameter at most $\epsilon$ for all $U\in \mathcal{U}$.
\end{proof}

\begin{Example}
Suppose $(X,d)$ is a proper metric space and $C(X,d)$ is the C*-subalgebra of $C^b(X)$ consisting of ss-continuous functions on $X$ with respect to the ss-structure induced by $d$.
If $\mathscr{B}$ is the family of all pre-compact subsets of $X$, then $LS(C(X,d),\mathscr{B})$ coincides with the $C_0$-coarse structure of N.Wright (see \cite{W1} or \cite{W2}).
\end{Example}
\begin{proof} The $C_0$-coarse structure of $X$, when translated into a large scale structure, consists of pre-compact covers $\mathcal{U}$ of $X$ with the property that for any $\epsilon > 0$ there is a compact subset $K$ of $X$ all sets $U\setminus K$, $U\in \mathcal{U}$, are of diameter smaller than $\epsilon$. Therefore
$\mathcal{U}$ is a scale in $LS(C(X,d),\mathscr{B})$. Indeed, given $f\in C(X,d)$
and given $\epsilon > 0$, pick $\delta > 0$ with the property that the diameter of $f(A)$ is smaller than $\epsilon$ if the diameter of $A$ is smaller than $\delta$.
Choose a compact subset $K$ of $X$ so that the diameter of $U\setminus K$ is less than $\delta$ for all $U\in \mathcal{U}$. Hence $f(U\setminus K)$ is of diameter less than $\epsilon$ for all $U\in \mathcal{U}$.

Conversely, suppose $\mathcal{U}$ is a scale in $LS(C(X,d),\mathscr{B})$
but not a scale in the $C_0$-structure. Therefore there is
 $\epsilon > 0$ such that for each compact subset $K$ of $X$ there is $U_K\in \mathcal{U}$ with the diameter of $U_K\setminus K$ larger than $2\epsilon$.
We can pick a sequence $\{x_n\}$ in $X$ diverging to infinity such that $d(x_n,x_m) > 2\epsilon$ for all $m\ne n$ and there is a sequence $\{y_n\}$ in $X$ diverging to infinity so that
$d(x_n,y_n) > 2\epsilon$ yet $x_n,y_n\in U_n\in\mathcal{U}$ for all $n$. 
Consider the function $f$ defined as the distance from $x$ to the complement of $B(x_n,\epsilon)$ on each $B(x_n,\epsilon)$ and $f(x)=0$ for all $x$ outside the union $\bigcup\limits_{n\ge 1}B(x_n,\epsilon)$. Notice $f\in C(X,d)$
yet for any compact subset $K$ of $X$ there is $n\ge 1$ such that $x_n,y_n\in U_n\setminus K$ resulting in the diameter of $f(U_n\setminus K)$ being larger than $\epsilon$. A contradiction.
\end{proof}

\begin{Example}
Suppose $(X,d)$ is a proper metric space and $\mathscr{C}$ is either the C*-subalgebra $C_0(X)$ of $C^b(X)$ consisting of 
continuous functions on $X$ vanishing at infinity or the C*-subalgebra of $C^b(X)$ consisting of all continuous functions on $X$ that extend over the one-point compactification of $X$ as continuous functions.
If $\mathscr{B}$ is the family of all pre-compact subsets of $X$, then $LS(\mathscr{C},\mathscr{B})$ coincides with the maximal large scale structure $\mathscr{S}$ whose bounded sets are identical with pre-compact subsets of $X$.
\end{Example}
\begin{proof} Suppose $\mathcal{U}$ is a pre-compact cover of $X$ with the property that for any pre-compact subset $K$ of $X$, $st(K,\mathcal{U})$ is pre-compact. We claim
$\mathcal{U}$ is a scale in $LS(\mathscr{C},\mathscr{B})$. Indeed, given $f\in \mathscr{C}$
and given $\epsilon > 0$, pick a compact subset $K$ of $X$ so that the 
variation of $f$ on $X\setminus K$ is less that $\epsilon/2$. Hence $f(U\setminus K)$ is of diameter less than $\epsilon$ for all $U\in \mathcal{U}$.

Conversely, suppose $\mathcal{U}$ is a scale in $LS(\mathscr{C},\mathscr{B})$
but not a scale in $\mathscr{S}$. Therefore there is a pre-compact subset $K$ of $X$ such that $st(K,\mathcal{U})$ is not pre-compact.
We can pick a sequence $\{x_n\}$ in $X$ diverging to infinity and a sequence $\{y_n\}$ in $K$ converging to $y_0$ such that there are $U_n\in \mathcal{U}$ so that $x_n,y_n\in U_n\in\mathcal{U}$ for all $n$. 
Using Tietze's Extension Theorem there is a continuous function $f$ 
on the one-point compactification $\omega(X)$ of $X$ such that
$f(\omega(X)\setminus X)=0$ and $f(y_0)=1$. Notice $f\in \mathscr{C}$
yet for any compact subset $K$ of $X$ there is $n\ge 1$ such that $x_n,y_n\in U_n\setminus K$ and the diameter of $f(U_n\setminus K)$ is larger than $1/2$. A contradiction.
\end{proof}

\section{C*-algebras and hybrid small scale structures}

Notice every C*-subalgebra $\mathscr{C}$ of $C^b(X)$ induces a hybrid small scale space $HSS(\mathscr{C})$ as follows: 
the ss-structure is the smallest one making all $f\in \mathscr{C}$ ss-continuous and the bounded structure is that induced from $\mathscr{C}$ (see \ref{BddStructureInducedByCStar}).
\begin{Definition}
Suppose $(X,\mathscr{SS},\mathscr{B})$ is a hybrid small scale space.
The functor $L_0$ to hybrid large scale spaces is given by $X\to LS(C(X,\mathscr{SS}),\mathscr{B}))$.
\end{Definition}

\begin{Proposition}
 Suppose $(X,\mathscr{SS}_X,\mathscr{B}_X)$ and $(Y,\mathscr{SS}_Y,\mathscr{B}_Y)$ are hybrid small scale spaces.
 If $f:X\to Y$ is hss-continuous, then $f:L_0(X)\to L_0(Y)$ is hls-continuous.
\end{Proposition}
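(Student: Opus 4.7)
My plan is to verify the two ingredients of hls-continuity separately. Continuity of $f$ between the topologies induced by $\mathscr{SS}_X$ and $\mathscr{SS}_Y$ is immediate, because any ss-continuous map is continuous with respect to the topologies each ss-structure generates. The real work is ls-continuity between $L_0(X) = LS(C(X,\mathscr{SS}_X),\mathscr{B}_X)$ and $L_0(Y) = LS(C(Y,\mathscr{SS}_Y),\mathscr{B}_Y)$, so I fix a uniformly $(C(X,\mathscr{SS}_X),\mathscr{B}_X)$-bounded family $\mathcal{U}$ on $X$ and check that $f(\mathcal{U}) := \{f(U) : U \in \mathcal{U}\}$ satisfies the two clauses in the definition of uniformly $(C(Y,\mathscr{SS}_Y),\mathscr{B}_Y)$-boundedness.

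Clause (2) follows by pulling everything back through $f$. Given a weakly bounded $B' \subseteq Y$, a function $g \in C(Y,\mathscr{SS}_Y)$, and $\epsilon > 0$, the composition $g \circ f$ is bounded and ss-continuous, so it lies in $C(X,\mathscr{SS}_X)$. Applying clause (2) of $\mathcal{U}$'s uniform boundedness to the triple $(\emptyset,\,g\circ f,\,\epsilon)$ yields a weakly bounded $A \subseteq X$ such that $g(f(U \setminus A))$ has diameter $\le \epsilon$ for every $U \in \mathcal{U}$. Setting $B'' := B' \cup f(A)$, which is weakly bounded in $Y$ because hss-continuity sends $A$ to a weakly bounded set and finite unions of weakly bounded sets are weakly bounded, the pointwise inclusion $f(U) \setminus B'' \subseteq f(U) \setminus f(A) \subseteq f(U \setminus A)$ delivers $g(f(U) \setminus B'')$ of diameter $\le \epsilon$, as required.

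Clause (1) is where I expect the main obstacle. Given a weakly bounded $B' \subseteq Y$, I would start from the set-theoretic containment
\[
  st\bigl(B', f(\mathcal{U})\bigr) \;\subseteq\; B' \cup f\bigl(st(f^{-1}(B'), \mathcal{U})\bigr),
\]
which holds because $f(U) \cap B' \neq \emptyset$ forces $U \cap f^{-1}(B') \neq \emptyset$, whence $U \subseteq st(f^{-1}(B'), \mathcal{U})$. The plan is then to argue that $f^{-1}(B')$ is weakly bounded in $X$, apply clause (1) of $\mathcal{U}$ to deduce that $st(f^{-1}(B'), \mathcal{U})$ is weakly bounded in $X$, and finally use hss-continuity to push this forward to a weakly bounded subset of $Y$. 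The delicate point is controlling $f^{-1}(B')$: hss-continuity only gives forward preservation of weak boundedness, whereas backward preservation is precisely what Lemma \ref{WB} supplies under the additional hypotheses that $f$ be proper and send bounded sets to bounded sets. Verifying that these extra properties are automatic for hss-continuous maps in the $L_0$ setup -- exploiting the coupling between $\mathscr{B}$ and $\mathscr{C}$ made explicit in Definition \ref{BddStructureInducedByCStar} -- is where the substantive work for clause (1) will lie.
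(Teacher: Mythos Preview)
Your treatment of clause~(2) is essentially the paper's own argument: compose $g$ with $f$ to land in $C(X,\mathscr{SS}_X)$, apply the hypothesis on $\mathcal{U}$ to obtain a weakly bounded $A\subset X$, and then use $f(U)\setminus f(A)\subset f(U\setminus A)$. You are in fact more careful than the paper, which neither starts from a prescribed weakly bounded $B'\subset Y$ nor verifies that the witness it produces contains $B'$; your device of enlarging to $B''=B'\cup f(A)$ fills that small lacuna.

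Where you diverge from the paper is clause~(1): the paper's proof simply omits it, checking only clause~(2) and concluding. So the obstacle you flag is not a defect of your proposal relative to the paper---it is a gap the paper itself leaves open. Your diagnosis is accurate: the containment $st(B',f(\mathcal{U}))\subset B'\cup f(st(f^{-1}(B'),\mathcal{U}))$ is correct, but pushing the argument through would require $f^{-1}(B')$ to be weakly bounded, and hss-continuity as defined provides only \emph{forward} preservation of weak boundedness. The route via Lemma~\ref{WB} demands properness, which is not part of the hypothesis, and your suggested appeal to Definition~\ref{BddStructureInducedByCStar} does not apply in general, since the bounded structures $\mathscr{B}_X,\mathscr{B}_Y$ of an arbitrary hss-space are given independently and need not coincide with those induced by $C(X,\mathscr{SS}_X),C(Y,\mathscr{SS}_Y)$. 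In short, you have correctly reproduced the paper's proof of clause~(2), and correctly identified that clause~(1) requires an argument neither you nor the paper supplies.
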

\begin{proof}
 Obviously, $f:X\to Y$ is continuous. Suppose $\mathcal{U}$ is a uniformly bounded family with respect to $L_0(X)$.
 To show $f(\mathcal{U})$ is uniformly bounded in $L_0(Y)$ consider an ss-continuous function $g:Y\to \mathbb{C}$ and assume $\epsilon > 0$.
 Since $g\circ f$ is ss-continuous, there is a weakly bounded subset $B$ of $X$ with the property that $diam((g\circ f)(U\setminus B)) < \epsilon$
 for all $U\in \mathcal{U}$. 
 Since $g(f(U)\setminus f(B))\subset (g\circ f)(U\setminus B)$, the diameter of $g(f(U))\setminus f(B)$ is smaller than $\epsilon$.
\end{proof}

One of the interesting cases of $L_0$ to consider is when $\mathscr{SS}$ is induced from a compactification $h(X)$ of $X$, $X$ is open in $h(X)$,
and $\mathscr{B}$ consists of all pre-compact subsets of $X$. In that case
$C(X,\mathscr{SS})$ is identical with restrictions on $X$ of continuous functions from $h(X)$ to $\mathbb{C}$.

\begin{Definition}\label{ContControlledScales}
 Suppose $X$ is a locally compact and $h(X)$ is a compactification of $X$.
 A scale $\mathcal{U}$ on $X$ is \textbf{continuously controlled by $h(X)$}
 if the following conditions are satisfied:\\
 1. $st(C,\mathcal{U})$ is pre-compact for every compact subset $C$ of $X$,\\
 2. for every neighborhood $V$ of $x\in h(X)\setminus X$
there is a neighborhood $W$ of $x$ in $V$ such that $U\cap W\ne \emptyset$ implies
$U\subset V$ for every $U\in \mathcal{U}$.
\end{Definition}

Notice the similarity of continuously controlled scales to Dugundji's covers of J.Damas \cite{MD}.

\begin{Theorem} \label{L0Char}
Suppose $X$ is a locally compact hss-space whose all bounded sets are pre-compact and its ss-structure is inherited from a compactification $h(X)$ of $X$. The family of all continuously controlled by $h(X)$ scales forms an hls-structure on $X$ that is identical with $L_0(X)$.
\end{Theorem}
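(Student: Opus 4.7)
The plan is to prove the theorem by establishing the set-theoretic equality between the family $\mathcal{F}$ of scales on $X$ that are continuously controlled by $h(X)$ and $L_0(X)=LS(C(X,\mathscr{SS}),\mathscr{B})$; once this is done, the hls-structure property transfers from $L_0(X)$ via the Lemma following the definition of $LS(\mathscr{C},\mathscr{B})$, and the topology on $X$ is the one inherited from $h(X)$ by assumption. Since any two points of $X$ form a pre-compact set, $X$ has a single $\mathscr{B}$-component and weakly bounded coincides with pre-compact; moreover, condition 1 of continuous control and condition 1 of uniform $(C(X,\mathscr{SS}),\mathscr{B})$-boundedness agree, since stars of pre-compact sets and stars of their compact closures are simultaneously pre-compact. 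I handle the two inclusions separately.

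For $\mathcal{F}\subseteq L_0(X)$, fix $\mathcal{U}\in\mathcal{F}$ and verify condition 2 of $LS$. Given pre-compact $B$, $f\in C(X,\mathscr{SS})$ (which extends to continuous $\bar f:h(X)\to\mathbb{C}$), and $\epsilon>0$, for each $x\in h(X)\setminus X$ continuity of $\bar f$ yields an open neighborhood $V_x$ with $\diam\bar f(V_x)<\epsilon$, and continuous control then shrinks it to a neighborhood $W_x\subseteq V_x$ with $U\cap W_x\neq\emptyset\Rightarrow U\subseteq V_x$ for every $U\in\mathcal{U}$. Compactness of $h(X)\setminus X$ (closed in compact $h(X)$) gives a finite subcover $W_{x_1},\dots,W_{x_n}$; let $W=\bigcup_i W_{x_i}$. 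Then $h(X)\setminus W$ is a compact subset of $X$, so $B':=(h(X)\setminus W)\cup\overline{B}^{h(X)}$ is compact in $X$, hence pre-compact. Any $U\in\mathcal{U}$ with $U\setminus B'\neq\emptyset$ contains a point of $W$, hence of some $W_{x_i}$, whence $U\subseteq V_{x_i}$ and $\diam f(U\setminus B')\leq\diam\bar f(V_{x_i})<\epsilon$.

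For $L_0(X)\subseteq\mathcal{F}$, fix $\mathcal{U}\in L_0(X)$ and verify condition 2 of continuous control. Given $x\in h(X)\setminus X$ and a neighborhood $V$ of $x$, use Urysohn's lemma in the compact Hausdorff space $h(X)$ to pick a continuous $f:h(X)\to[0,1]$ with $f(x)=1$ and $f\equiv 0$ on $h(X)\setminus V$; its restriction lies in $C(X,\mathscr{SS})$. Apply the $L_0$-condition to $B=\emptyset$, this $f$, and $\epsilon=1/4$ to get a pre-compact $K'\subseteq X$ with $\diam f(U\setminus K')\leq 1/4$ for all $U\in\mathcal{U}$. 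Enlarge $K'$ to $K'':=\overline{\st(K',\mathcal{U})}^X$, still pre-compact by condition 1. Then $W:=\{y\in h(X):f(y)>3/4\}\cap(h(X)\setminus K'')$ is an open neighborhood of $x$ contained in $V$. For $U\in\mathcal{U}$ meeting $W$ at some $y$, the set $U$ cannot also meet $K'$: otherwise $U\subseteq\st(K',\mathcal{U})\subseteq K''$ would place $y$ in $K''$, contradicting $y\in W$. Hence $U=U\setminus K'$, and $f(y)>3/4$ with $\diam f(U)\leq 1/4$ forces $f>1/2$ on $U$, so $U\subseteq\{f>0\}\subseteq V$.

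The main obstacle is the second inclusion, specifically the scenario in which a set $U$ near $x$ also reaches back into the pre-compact set $K'$ on which the $L_0$-estimate gives no diameter bound. The resolution is the enlargement trick $K'':=\overline{\st(K',\mathcal{U})}$ together with the shrinkage $W\subseteq h(X)\setminus K''$: this creates a dichotomy — each $U$ is either entirely inside $K''$ or disjoint from $K'$ — and the former is ruled out by the requirement that $U$ meet $W$.
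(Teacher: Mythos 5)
Your proposal is correct and follows essentially the same route as the paper: both inclusions are proved with Urysohn-type functions on $h(X)$, and your key device $K''=\overline{st(K',\mathcal{U})}$ with $W=\{f>3/4\}\setminus K''$ is exactly the paper's trick $W=V'\setminus cl(st(C,\mathcal{U}))$ for forcing the dichotomy that a set meeting $W$ must miss the compact core. The only cosmetic difference is in the inclusion of continuously controlled scales into $L_0(X)$, where the paper argues with star refinements of s-scales of $h(X)$ and takes $C=h(X)\setminus\bigcup_x W_x$ directly, while you work function-by-function with $\epsilon$-neighborhoods and a finite subcover of the compact remainder $h(X)\setminus X$; both arguments are sound.
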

\begin{proof} Suppose $\mathcal{U}$ is an l-scale of $L_0(X)$ and $V$ is an open neighborhood of $x\in h(X)\setminus X$. Pick a neighborhood $V'$ of $x$ whose closure $cl(V')$ is contained in $V$ and choose a continuous function
$f:h(X)\to [0,1]$ such that $f(cl(V'))=\{0\}$ and $f(h(X)\setminus V)=\{1\}$. There
is a compact subset $C$ of $X$ such that for any $U\in \mathcal{U}$ intersecting
$X\setminus C$, the diameter of $f(U\setminus C)$ is smaller than $0.5$.
Put $W=V'\setminus cl(st(C,\mathcal{U}))$. If $U\cap W\ne\emptyset$ for some $U\in \mathcal{U}$,
then $U\cap C=\emptyset$ and $U\cap (X\setminus V)\ne \emptyset$ is not possible. Therefore $U\subset V$
and $\mathcal{U}$ is continuously controlled by $h(X)$.

Suppose $\mathcal{U}$ is continuously controlled and $\mathcal{V}$ is an s-scale on $h(X)$.
Pick a star refinement $\mathcal{W}$ of $\mathcal{V}$. For every $x\in h(X)\setminus X$
choose a neighborhood $W_x$ such that $U\cap W_x\ne\emptyset$, $U\in \mathcal{U}$,
implies $U\subset st(x,\mathcal{W}$. Define $C$ as $h(X)\setminus \bigcup\limits_{x\in h(X)\setminus X} W_x$. Suppose $U\cap (X\setminus C)\ne\emptyset$ for some $U\in \mathcal{U}$. That means $U\cap W_x\ne\emptyset$ for some $x\in h(X)\setminus X$
resulting in $U\subset st(x,\mathcal{W}\subset V$ for some $V\in\mathcal{V}$.
That means $\mathcal{U}$ is an l-scale of $L_0(X)$.
\end{proof}

\section{C*-algebras and hybrid large scale structures}
\subsection{C*-algebras induced by hybrid large scale structures}

\begin{Definition}
Suppose $(X,\mathscr{LS},\mathscr{T})$ is a hybrid large scale space.
The C*-subalgebra of $C^b(X)$ consisting of all slowly oscillating and continuous functions on $X$ is denoted by $C(X,\mathscr{LS},\mathscr{T})$.
\end{Definition}

\begin{Proposition}
If $(\mathscr{LS},\mathscr{T})$ is a hybrid large scale structure on $X$ with bounded sets $\mathscr{B}$, then
$$\mathscr{LS}\subset LS(C(X,\mathscr{LS},\mathscr{T}),\mathscr{B}).$$
\end{Proposition}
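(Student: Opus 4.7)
The plan is to unpack the definition of \emph{uniformly $(C(X,\mathscr{LS},\mathscr{T}),\mathscr{B})$-bounded} and verify its two defining conditions for an arbitrary $\mathcal{U}\in \mathscr{LS}$. So fix such a $\mathcal{U}$.

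For the first condition, I need $st(B,\mathcal{U})$ to be weakly bounded whenever $B$ is weakly bounded. This is immediate from Lemma \ref{WB}'s companion lemma stated just before the definition of slowly oscillating functions, which asserts exactly this in any large scale space.

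For the second condition, I would fix a weakly bounded set $B$, a function $f\in C(X,\mathscr{LS},\mathscr{T})$, and $\epsilon>0$. Since $f$ is continuous and slowly oscillating by definition of $C(X,\mathscr{LS},\mathscr{T})$, I would invoke Proposition \ref{CharOfSlowlyOscillating} with the scale $\mathcal{U}$ on $X$ and the scale $\mathcal{V}$ on $\mathbb{C}$ consisting of open balls of radius $\epsilon/2$ in the standard metric small scale structure on $\mathbb{C}$. This yields a weakly bounded set $B_0\subseteq X$ such that $f(U\setminus B_0)$ is contained in a single element of $\mathcal{V}$, and in particular has diameter at most $\epsilon$, for every $U\in \mathcal{U}$. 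Setting $B' := B\cup B_0$ produces a weakly bounded set (the union of finitely many weakly bounded sets is weakly bounded, as noted earlier) that contains $B$ and satisfies $f(U\setminus B')\subseteq f(U\setminus B_0)$, so $\operatorname{diam}(f(U\setminus B'))\le \epsilon$ for all $U\in\mathcal{U}$, as required.

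There is essentially no obstacle beyond bookkeeping of definitions: the statement is really just asserting that continuous slowly oscillating functions are exactly the sort of test functions that the definition of $LS(\mathscr{C},\mathscr{B})$ was designed to accommodate, so every original l-scale automatically satisfies the $(\mathscr{C},\mathscr{B})$-uniform-boundedness conditions. The only mild point to keep in mind is that the characterization from Proposition \ref{CharOfSlowlyOscillating} is what matches the exact form ``$f(U\setminus B')$ has small diameter'' appearing in the definition of uniformly $(\mathscr{C},\mathscr{B})$-bounded, which is why invoking that characterization rather than Definition \ref{SlowlyOscillatingDef} directly gives the cleanest argument.
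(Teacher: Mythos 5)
Your proof is correct and follows essentially the same route as the paper: invoke the slow-oscillation of each $f\in C(X,\mathscr{LS},\mathscr{T})$ (in the form of Proposition \ref{CharOfSlowlyOscillating}, applied to $\mathcal{U}$ and the $\epsilon$-scale on $\mathbb{C}$) to produce the required weakly bounded set, which is exactly the paper's one-line argument. Your version is merely more explicit, additionally verifying condition 1 via the star-of-a-weakly-bounded-set lemma and enlarging $B_0$ to $B'=B\cup B_0$, details the paper leaves tacit.
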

\begin{proof}  Given $\mathcal{U}\in \mathscr{S}$, given $f\in C(X,\mathscr{S})$, and given $\epsilon > 0$ there is a weakly bounded subset $B$ of $X$ such that the diameter of $f(U\setminus B)$ is smaller than $\epsilon$ for all $U\in \mathcal{U}$. That means $\mathcal{U}\in LS(C(X,\mathscr{LS},\mathscr{T}),\mathscr{B})$.
\end{proof}

\begin{Definition}
Suppose $(\mathscr{LS},\mathscr{T})$ is a hybrid large scale structure on $X$ with bounded sets $\mathscr{B}$. $(\mathscr{LS},\mathscr{T})$ is called \textbf{reflective} if
$$LS(C(X,\mathscr{LS},\mathscr{T}),\mathscr{B})=\mathscr{LS}.$$
\end{Definition}

\begin{Theorem}
If $\mathscr{LS}$ is a metric large scale structure on $X$ with bounded sets $\mathscr{B}$ and induced topology $\mathscr{T}$, then
$$LS(C(X,\mathscr{LS},\mathscr{T}),\mathscr{B})=\mathscr{LS}.$$
\end{Theorem}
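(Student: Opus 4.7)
The forward inclusion $\mathscr{LS}\subseteq LS(C(X,\mathscr{LS},\mathscr{T}),\mathscr{B})$ is the preceding proposition, so the plan is to prove the reverse. Let $\mathcal{U}$ be uniformly $(\mathscr{C},\mathscr{B})$-bounded with $\mathscr{C}=C(X,\mathscr{LS},\mathscr{T})$; the goal is to show the mesh of $\mathcal{U}$ is bounded. I may assume $X$ has infinite diameter, since otherwise $\mathscr{LS}$ contains every cover and the statement is trivial. Arguing by contradiction, suppose $\sup_{U\in\mathcal{U}}\diam(U)=\infty$ and pick $U_n\in\mathcal{U}$ with $x_n,y_n\in U_n$ and $d(x_n,y_n)\to\infty$. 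The first step is to force both sequences to diverge. If $\{x_n\}$ were bounded, then taking $B=\{x_n:n\ge 1\}$, condition (1) of uniform $(\mathscr{C},\mathscr{B})$-boundedness forces $st(B,\mathcal{U})$ to be bounded; but $st(B,\mathcal{U})\supseteq\bigcup_n U_n$ contains every $y_n$, while $\{x_n\}$ bounded combined with $d(x_n,y_n)\to\infty$ gives $y_n\to\infty$, a contradiction. Passing to a subsequence, assume $x_n\to\infty$; running the symmetric argument on $\{y_n\}$ and extracting once more, I may assume $x_n\to\infty$, $y_n\to\infty$, and $d(x_n,y_n)\to\infty$.

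Next I would construct a continuous bounded slowly oscillating witness $f$ separating the $x_n$ from the $y_n$. Set $r_n=d(x_n,y_n)/2$. Since both sequences diverge, a routine inductive extraction yields a further subsequence along which the closed balls $\bar{B}(y_n,r_n)$ are pairwise disjoint, $r_n\to\infty$, and no $x_m$ lies in any $\bar{B}(y_n,r_n)$. Define
\[
f(x)=\begin{cases}\max\!\bigl(0,\,1-d(x,y_n)/r_n\bigr)&\text{if }x\in\bar{B}(y_n,r_n),\\ 0&\text{otherwise.}\end{cases}
\]
Then $f:X\to[0,1]$ is continuous (the pieces agree on the sphere $d(\cdot,y_n)=r_n$), $f(y_n)=1$, and $f(x_n)=0$. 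For the slowly oscillating check, given a cover of mesh $\le M$ and $\epsilon>0$, pick $N$ so that $r_n>M/\epsilon$ and the balls $\bar{B}(y_n,r_n)$ for $n\ge N$ are pairwise more than $M$ apart; then any $V$ of diameter $\le M$ that avoids the bounded set $B_0:=\bigcup_{n<N}\bar{B}(y_n,r_n+M)$ meets at most one $\bar{B}(y_n,r_n)$, on which $f$ is $(1/r_n)$-Lipschitz, giving $\diam f(V)\le M/r_n<\epsilon$. Hence $f\in\mathscr{C}$.

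Finally I would apply condition (2) of uniform $(\mathscr{C},\mathscr{B})$-boundedness to this $f$ with $\epsilon=1/2$: there must exist a bounded $B'$ with $\diam f(U\setminus B')<1/2$ for every $U\in\mathcal{U}$. But for $n$ sufficiently large both $x_n$ and $y_n$ lie outside $B'$, so $\{0,1\}\subseteq f(U_n\setminus B')$, forcing $\diam\ge 1$, a contradiction. The step I expect to be the most delicate is the subsequence extraction for $f$: I must simultaneously secure disjointness of the $\bar{B}(y_n,r_n)$, the growth $r_n\to\infty$, and the condition that no $x_m$ falls inside any ball, and then bundle these into a clean slow-oscillation estimate. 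This is essentially the metric specialization of the construction used in the $C_0$-coarse structure example earlier in the paper, and the same pattern of Lipschitz bumps placed along a divergent sequence carries it through.
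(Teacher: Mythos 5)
Your overall strategy coincides with the paper's: the forward inclusion is the preceding proposition, and for the reverse you argue by contradiction, split into the case where one of the two sequences stays bounded (handled via condition (1) on $st(B,\mathcal{U})$, exactly as in the paper) and the case where both escape to infinity, and there you build a continuous, slowly oscillating family of Lipschitz bumps separating $x_n$ from $y_n$ and contradict condition (2) with $\epsilon=1/2$. The gap is in the bump construction, precisely at the step you flagged as delicate. You set $r_n=d(x_n,y_n)/2$ and claim a routine inductive extraction yields pairwise disjoint balls $\bar{B}(y_n,r_n)$ (and, in the slow-oscillation check, that for every $M$ the balls are eventually pairwise more than $M$ apart). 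With the radius tied to half the pair distance this can be outright impossible: take $X=\RR$, $x_n=-n$, $y_n=n$; then $r_n=n$ and $\bar{B}(y_n,r_n)=[0,2n]$, so every ball contains $0$ and no subsequence whatsoever is pairwise disjoint, let alone eventually $M$-separated. So the function $f$ you describe cannot be constructed in general, and the rest of the argument has nothing to stand on.

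The repair is exactly the paper's device: decouple the bump radius from the pair distance. Pass to a subsequence and re-index so that $d(x_k,y_k)\ge 2k$, then use radius $k$ (the new index), e.g. $f=\sum_k \dist\left(\cdot\,,X\setminus B(x_k,k)\right)/k$; since the radius at step $k$ is fixed before the $k$-th center is chosen, and the centers diverge, one can inductively choose the subsequence so that the balls $B(x_k,k)$ are pairwise disjoint, while $f(x_k)\ge 1$ and $f(y_k)=0$ because $d(x_k,y_k)\ge 2k>k$. Note also that your slow-oscillation estimate does not actually need the eventual $M$-separation of the balls: given a scale of mesh $M$ and $\epsilon>0$, put the finitely many balls of index $k\le K$ (inflated by $M$) into the weakly bounded exceptional set, and for points $a,b$ lying in two different remaining bumps use $|f(a)-f(b)|\le d(a,b)\left(1/j+1/k\right)\le 2M/K$, which is the estimate that makes the paper's one-line claim that $f$ is slowly oscillating and continuous go through.
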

\begin{proof} 
Suppose $\mathcal{U}\in LS(C(X,\mathscr{LS},\mathscr{T}),\mathscr{B})\setminus \mathscr{S}$. 
One possibility is the existence of $U_n\in \mathcal{U}$ containing points $x_n, y_n$ with distance at least $2n$ such that both sequences $\{x_n\}$ and $\{y_n\}$ diverge to infinity. Moreover, we may assume that all balls $B(x_n,n)$ are mutually disjoint. Let $f$ be the union of functions $dist(x,X\setminus B(x_n,n))/n$. Notice $f$ is slowly oscillating and continuous, so there is a bounded subset $B$ of $X$ such that for all $n$ the diameter of $f(U_n\setminus B)$ is less than $0.5$. However, for some $n$ both points $x_n$, $y_n$ are outside of $B$ and $f(x_n)-f(y_n)=1$, a contradiction.

The other possibility is the existence of $U_n\in \mathcal{U}$ containing points $x_n, y_n$ with distance at least $2n$ such that $\{x_n\}$ diverges to infinity
and $B:=\{y_n\}$ is bounded. In that case $st(B,\mathcal{U})$ is unbounded
contradicting the facts of all elements of $LS(C(X,\mathscr{S}),\mathscr{B})$ being bounded
and $LS(C(X,\mathscr{LS},\mathscr{T}),\mathscr{B})$ containing all uniformly bounded families of $\mathscr{LS}$.
\end{proof}

\begin{Problem}
Characterize reflective hybrid large scale structures.
\end{Problem}

\begin{Proposition}\label{InclusionOfCIntoSLOfC}
If $\mathscr{C}$ is C*-subalgebra of $C^b(X)$ and $\mathscr{B}$ is a bounded structure on $X$, then
$\mathscr{C}\subset C(X,LS(\mathscr{C},\mathscr{B}))$.
\end{Proposition}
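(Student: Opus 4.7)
The plan is to verify that every $f \in \mathscr{C}$ is both continuous and slowly oscillating with respect to $LS(\mathscr{C}, \mathscr{B})$ together with its natural topology (the one induced by $SS(\mathscr{C})$). Continuity is immediate from the definitions: $SS(\mathscr{C})$ is by construction the smallest ss-structure making every member of $\mathscr{C}$ ss-continuous, so the associated topology makes every $f \in \mathscr{C}$ continuous. It therefore remains only to establish that each such $f$ is slowly oscillating in the sense of Definition \ref{SlowlyOscillatingDef}.

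To verify this, fix $f\in \mathscr{C}$, a uniformly bounded family $\mathcal{U} \in LS(\mathscr{C}, \mathscr{B})$, and an arbitrary scale $\mathcal{V}$ on $\mathbb{C}$. Since $\mathcal{V}$ lies in the usual metric ss-structure on $\mathbb{C}$, it has a positive Lebesgue number in the sense of Definition \ref{Lebesgue number of a scale}; equivalently, there exists $\epsilon > 0$ such that every subset of $\mathbb{C}$ of diameter at most $\epsilon$ is contained in some element of $\mathcal{V}$. Apply condition (2) in the definition of uniformly $(\mathscr{C}, \mathscr{B})$-bounded to a singleton $B = \{x_0\}$, the function $f$, and this $\epsilon$: this yields a weakly bounded set $B' \supseteq B$ such that $\diam\bigl(f(U \setminus B')\bigr) \le \epsilon$ for every $U \in \mathcal{U}$. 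Consequently each $f(U \setminus B')$ is contained in an element of $\mathcal{V}$, and Proposition \ref{CharOfSlowlyOscillating} then shows that $f$ is slowly oscillating.

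There is essentially no real obstacle here, since the definition of $LS(\mathscr{C}, \mathscr{B})$ was engineered precisely so that members of $\mathscr{C}$ would automatically be slowly oscillating with respect to it. The only minor bookkeeping step is translating the scale-theoretic hypothesis on $\mathbb{C}$ (the arbitrary ss-scale $\mathcal{V}$) into the metric data (a diameter bound at scale $\epsilon$) that condition (2) of the defining property of $LS(\mathscr{C}, \mathscr{B})$ directly supplies, which is handled by the Lebesgue-number observation above.
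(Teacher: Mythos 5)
Your proposal is correct and follows essentially the same route as the paper's proof: apply condition (2) of the definition of uniformly $(\mathscr{C},\mathscr{B})$-bounded covers to get $f(U\setminus B')$ of diameter at most $\epsilon$ and conclude via the characterization of slowly oscillating functions. You merely spell out two details the paper leaves implicit — the reduction of an arbitrary s-scale $\mathcal{V}$ on $\mathbb{C}$ to a diameter bound $\epsilon$, and the continuity of $f$ in the topology induced by $SS(\mathscr{C})$ — both of which are fine.
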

\begin{proof} Given $f\in \mathscr{C}$, given $\mathcal{U}\in LS(\mathscr{C},\mathscr{B})$, and given $\epsilon > 0$ there is $B\in \mathscr{B}$ such that for all $U\in \mathcal{U}$ the diameter of $f(U\setminus B)$ is smaller than $\epsilon$. That means $f$ is slowly oscillating and $f\in C(X,LS(\mathscr{C},\mathscr{B}))$.
\end{proof}

\begin{Question}\label{BigQuestion2}
Suppose $\mathscr{C}$ is a unital C*-subalgebra of $C^b(X)$ and $\mathscr{B}$ is a bounded structure on $X$. When does
$$C(X,LS(\mathscr{C},\mathscr{B}))=\mathscr{C}$$
hold?
\end{Question}

\begin{Definition}
Suppose $(X,\mathscr{LS},\mathscr{T})$ is a hybrid large scale space.
The functor $S_0$ to hybrid small scale spaces is given by $X\to SS(C(X,\mathscr{LS},\mathscr{T}))$, where $C(X,\mathscr{LS},\mathscr{T})$
is the C*-subalgebra of $C^b(X)$ consisting of slowly oscillating and continuous functions.
\end{Definition}

\begin{Proposition}
 Suppose $(X,\mathscr{LS}_X,\mathscr{T}_X)$ and $(Y,\mathscr{LS}_Y,\mathscr{T}_Y)$ are hybrid large scale spaces.
 If $f:X\to Y$ is hls-continuous and proper, then $f:S_0(X)\to S_0(Y)$ is hss-continuous.
\end{Proposition}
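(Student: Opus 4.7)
The plan is to reduce hss-continuity of $f:S_0(X)\to S_0(Y)$ to a single key fact: that composition with $f$ sends $C(Y,\mathscr{LS}_Y,\mathscr{T}_Y)$ into $C(X,\mathscr{LS}_X,\mathscr{T}_X)$. Granting this, ss-continuity will be essentially tautological, since by definition $S_0(Y)=SS(C(Y,\mathscr{LS}_Y,\mathscr{T}_Y))$ is generated by pullbacks $g^{-1}(\mathcal{V})$ for $g\in C(Y,\mathscr{LS}_Y,\mathscr{T}_Y)$ and $\mathcal{V}$ a small scale on $\mathbb{C}$, and $f^{-1}(g^{-1}(\mathcal{V}))=(g\circ f)^{-1}(\mathcal{V})$ will then be a generator of $S_0(X)$'s ss-structure.

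To establish the key fact, I would fix $g\in C(Y,\mathscr{LS}_Y,\mathscr{T}_Y)$; continuity of $g\circ f$ comes directly from hls-continuity of $f$. For the slowly oscillating condition on $g\circ f$, I would begin with a uniformly bounded family $\mathcal{U}$ in $\mathscr{LS}_X$ and a scale $\mathcal{V}$ on $\mathbb{C}$, transfer $\mathcal{U}$ to a uniformly bounded family in $\mathscr{LS}_Y$ via ls-continuity of $f$, and invoke the slowly oscillating property of $g$ to obtain a weakly bounded $B_Y\subseteq Y$ with $g(f(U)\setminus B_Y)$ contained in some element of $\mathcal{V}$ for all $U\in\mathcal{U}$. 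The crucial move is pulling $B_Y$ back to a weakly bounded set in $X$: this is where properness enters, via Lemma \ref{WB}, which provides exactly that $B:=f^{-1}(B_Y)$ is weakly bounded. Since $f(U\setminus B)\subseteq f(U)\setminus B_Y$, the slowly oscillating estimate transfers back to $X$, and Proposition \ref{CharOfSlowlyOscillating} closes the verification.

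The remaining task is to confirm the bounded-structure condition in hss-continuity, which I would handle using the explicit description in Definition \ref{BddStructureInducedByCStar}. Given $B\subseteq X$ bounded in $S_0(X)$, contained in some $C$ enjoying the finite-subcover property with respect to non-negative elements of $C(X,\mathscr{LS}_X,\mathscr{T}_X)$, I would show $f(C)$ has the analogous property: any non-negative family $\{g_s\}\subset C(Y,\mathscr{LS}_Y,\mathscr{T}_Y)$ summing positively on $f(C)$ pulls back (by the key fact) to a non-negative family in $C(X,\mathscr{LS}_X,\mathscr{T}_X)$ summing positively on $C$, and the resulting finite subfamily pushes forward. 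A brief direct check shows that every two-point set is bounded under Definition \ref{BddStructureInducedByCStar}, so each $S_0$-space has a single $\mathcal{B}$-component and weakly bounded coincides with bounded. The main obstacle is the slowly oscillating verification: it is the only step that uses the properness hypothesis, and Lemma \ref{WB} is the essential tool for converting ``proper plus ls-continuous'' into the stability of weakly bounded sets under $f^{-1}$ that the argument requires.
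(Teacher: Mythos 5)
Your proposal is correct and takes essentially the same route as the paper's proof: reduce everything to showing that $g\mapsto g\circ f$ carries $C(Y,\mathscr{LS}_Y,\mathscr{T}_Y)$ into $C(X,\mathscr{LS}_X,\mathscr{T}_X)$, pushing a uniformly bounded family forward by ls-continuity, applying the slowly oscillating property of $g$, and using properness exactly once, via Lemma \ref{WB}, to pull the weakly bounded set back along $f$ (with the inclusion $(g\circ f)(U\setminus f^{-1}(B_Y))\subset g(f(U)\setminus B_Y)$ closing the estimate, just as in the paper). The only difference is that you also check the bounded-structure half of hss-continuity through Definition \ref{BddStructureInducedByCStar}, a point the paper's proof leaves implicit since it verifies only ss-continuity; that extra verification is sound.
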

\begin{proof}
 Suppose $g:S_0(Y)\to \mathbb{C}$ is ss-continuous, i.e. $g:Y\to \mathbb{C}$ is continuous and slowly oscillating. We need to show $g\circ f:S_0(X)\to \mathbb{C}$ is ss-continuous.
 That is synonymous with $g\circ f:Y\to \mathbb{C}$ being continuous and slowly oscillating. $g\circ f$ is certainly continuous, so the only missing item
 is $g\circ f$ being slowly oscillating. Suppose $\mathcal{U}$ is a uniformly bounded family in $\mathscr{LS}_X$ and $\epsilon > 0$.
 Since $f(\mathscr{U})$ is uniformly bounded in $\mathscr{LS}_Y$ and $g$ is slowly oscillating, there is a weakly bounded subset $B$ of $Y$
 such that $diam(g(f(U))\setminus B)) < \epsilon$ for all $U\in \mathcal{U}$. 
 Put $C=f^{-1}(B)$. By \ref{WB}, $C$ is weakly bounded. Notice $(g\circ f)(U\setminus C)\subset g(f(U)\setminus B)$,
 hence the diameter of $(g\circ f)(U\setminus C)$ is smaller than $\epsilon$ for all $U\in \mathcal{U}$.
\end{proof}

\begin{Corollary}
 Suppose $X$ is a locally compact hls-space whose ls-structure consists of all scales that are continuously controlled by a compactification $h(X)$ of $X$. Then $L_0(S_0(X))=X$.
 \end{Corollary}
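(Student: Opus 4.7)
The plan is to reduce this to Theorem \ref{L0Char} by identifying the slowly-oscillating continuous functions on $X$ with the restrictions to $X$ of continuous functions on $h(X)$.

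The heart of the argument, and what I expect to be the main obstacle, is the equality $\mathscr{C}:=C(X,\mathscr{LS},\mathscr{T})=C(h(X))|_X$. The containment $\supseteq$ should be routine: given $\bar f\in C(h(X))$, uniform continuity on the compact $h(X)$ combined with the continuous-control condition produces, for each $p\in h(X)\setminus X$, nested neighborhoods $W_p\subset V_p$ with $\diam(\bar f(V_p))<\epsilon$ and such that any $U\in\mathcal{U}$ meeting $W_p$ lies in $V_p$; compactness of $h(X)\setminus X$ extracts a finite subcover $W_{p_1},\ldots,W_{p_k}$, and the complement $K:=X\setminus\bigcup W_{p_i}$ is pre-compact and witnesses slow oscillation of $\bar f|_X$. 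For the harder direction $\subseteq$ I must show every continuous slowly oscillating $f$ extends continuously to $h(X)$, which reduces to proving that the oscillation of $f$ vanishes at every $p\in h(X)\setminus X$. If it did not, I would pick sequences $x_n,y_n\to p$ with $|f(x_n)-f(y_n)|\geq\epsilon$ and consider the scale
\[
\mathcal{U}=\{\{x_n,y_n\}:n\geq 1\}\cup\{\{z\}:z\in X\setminus\textstyle\bigcup_n\{x_n,y_n\}\}.
\]
Since $x_n,y_n$ escape every compact subset of $X$, $st(C,\mathcal{U})$ is pre-compact for any compact $C$; the continuous-control clause at an arbitrary $q\in h(X)\setminus X$ is arranged by shrinking a given neighborhood $V$ of $q$ to dodge the finitely many pairs not yet lying in $V$, using Hausdorffness of $h(X)$. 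Thus $\mathcal{U}$ is continuously controlled, so slow oscillation of $f$ provides a pre-compact $B$ with $\diam(f(U\setminus B))<\epsilon/2$ for every $U\in\mathcal{U}$; but $x_n,y_n\notin B$ for large $n$, giving $|f(x_n)-f(y_n)|<\epsilon/2$, a contradiction.

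Once this identification is in hand, I would note that $SS(\mathscr{C})$ is the restriction of the unique ss-structure on the compact Hausdorff space $h(X)$ to $X$, and that the Proposition following Definition \ref{BddStructureInducedByCStar} identifies the bounded structure induced by $\mathscr{C}$ with the pre-compact subsets of $X$ (using that $X$ is open in $h(X)$, so the $SS(\mathscr{C})$-topology coincides with the given $\mathscr{T}$). Consequently $S_0(X)$ meets the hypotheses of Theorem \ref{L0Char}, which identifies the ls-scales of $L_0(S_0(X))$ with precisely the scales continuously controlled by $h(X)$, i.e.\ $\mathscr{LS}$; meanwhile the topology of $L_0(S_0(X))$ is inherited from the ss-structure of $S_0(X)$ and thus equals $\mathscr{T}$. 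Hence $L_0(S_0(X))=X$ as hls-spaces.
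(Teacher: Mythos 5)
Your overall route is the same as the paper's: identify $C(X,\mathscr{LS},\mathscr{T})$ with the restrictions to $X$ of $C(h(X))$ (equivalently, show $S_0(X)$ carries the ss-structure inherited from $h(X)$) and then invoke Theorem \ref{L0Char}. Your easy containment and the final bookkeeping (bounded sets are the pre-compact ones, the topology is unchanged, so Theorem \ref{L0Char} applies) are fine, and your parenthetical use of a pre-compact $B$ in place of a weakly bounded one is harmless here, since every two-point set is bounded in this structure, so there is only one component. The genuine gap is in the hard direction, which is precisely the step the paper itself only asserts (\lq\lq the intersection consists of one point and the extension is continuous\rq\rq). Your contradiction starts by choosing sequences $x_n,y_n\to p$ in $h(X)$ with $|f(x_n)-f(y_n)|\ge\epsilon$. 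Such sequences exist only when $p$ is a sequential limit of points of $X$, essentially when $h(X)$ is first countable at $p$; for a general compactification no point of the remainder need be such a limit, and if you instead choose pairs indexed by an uncountable neighborhood base, your verification of clause 2 of Definition \ref{ContControlledScales} (the \lq\lq dodge the finitely many exceptional pairs\rq\rq\ step) breaks down: a family of pairs whose two halves accumulate at different remainder points is not continuously controlled, and nothing forces your chosen pairs to accumulate only at $p$.

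Moreover the gap is not merely technical, because the identification $C(X,\mathscr{LS},\mathscr{T})=C(h(X))|_X$ can actually fail for non-metrizable $h(X)$. Take $X=\mathbb{N}$ discrete and let $h(X)$ be the quotient of $\beta\mathbb{N}$ obtained by identifying two free ultrafilters $u\ne v$ with $A\in u$ and $\mathbb{N}\setminus A\in v$. A scale continuously controlled by this $h(X)$ can have only finitely many non-singleton elements: otherwise one extracts infinitely many pairwise disjoint two-point sets $\{x_i,y_i\}$, picks a free ultrafilter $w$ containing $\{x_i\}$ with $w\notin\{u,v\}$, and a basic saturated neighborhood of the image of $w$ meeting $\{x_i\}$ but missing $\{y_i\}$ violates clause 2. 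Consequently every bounded function on $\mathbb{N}$ is continuous and slowly oscillating, yet $\chi_A$ does not extend over $h(X)$. So the identity you (and the paper) rely on cannot be proved in full generality; your argument is complete, and indeed more detailed than the paper's, under the additional hypothesis that each point of $h(X)\setminus X$ has a countable neighborhood base (e.g. $h(X)$ metrizable). Whether the Corollary itself survives in general is a separate question: in the example above $h(X)$ and $\beta\mathbb{N}$ induce the same continuously controlled structure, so the conclusion still holds there, but establishing it would require a different argument than the one you and the paper give.
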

\begin{proof}
First, we need to show that $S_0(X)$ is $X$ with the ss-structure induced from $h(X)$.
Applying \ref{L0Char} will complete the proof.

 Suppose $f:X\to K$ is a continuous function from $X$ to a compact space $K$ and $f$ is slowly oscillating.
We can extend it over $h(X)$ by assigning to $x\in h(X)\setminus X$ the intersection of $cl(f(V\cap X))$,
$V$ ranging over all neighborhoods of $x$ in $h(X)$. That intersection consists of one point
and the extension $\tilde f:h(X)\to K$ is continuous. That means every s-scale on $S_0(X)$
is a restriction of an s-scale on $h(X)$. Notice the inclusion
$i:X\to h(X)$ is slowly oscillating and continuous. Therefore every s-scale  of $h(X)$ restricts to an s-scale of $S_0(X)$.
\end{proof}

\begin{Proposition}\label{S0L0EqualsidProp}
Suppose $X$ is a proper metric space, considered as an hss-space, then $X=S_0(L_0(X))$.
\end{Proposition}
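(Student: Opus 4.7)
The plan is to unravel both functors and reduce the proposition to a single identification of C*-algebras. By the earlier example identifying $LS(C(X,d),\mathscr{B})$ with N.~Wright's $C_0$-coarse structure, $L_0(X)$ is $X$ endowed with its metric topology and the $C_0$-coarse structure. Therefore $S_0(L_0(X)) = SS(\mathscr{C})$, where $\mathscr{C}$ denotes the C*-algebra of bounded continuous slowly oscillating functions on $X$ with respect to the $C_0$-coarse structure. I claim the whole proposition reduces to proving $\mathscr{C} = C_u(X)$, the algebra of bounded uniformly continuous functions: once this is in hand, the correspondence between ss-structures on $X$ and compactifications of $X$ recalled earlier in the paper identifies $SS(C_u(X))$ with the metric ss-structure (both correspond to the Samuel compactification of $X$), while the bounded structure induced by $\mathscr{C}$ on $SS(\mathscr{C})$ consists of the pre-compact subsets in the induced metric topology, which for a proper metric space are exactly the metrically bounded sets.

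The inclusion $C_u(X) \subset \mathscr{C}$ I would dispatch immediately: given $f \in C_u(X)$, a $C_0$-scale $\mathcal{U}$, and $\epsilon > 0$, a modulus $\delta > 0$ for $f$ together with a compact $K$ witnessing the $C_0$-condition at scale $\delta$ forces $\diam(f(U \setminus K)) \le \epsilon$ for every $U \in \mathcal{U}$. The reverse inclusion $\mathscr{C} \subset C_u(X)$ is the heart of the argument, proved by contrapositive. Suppose $f \in C^b(X)$ is continuous but fails uniform continuity, witnessed by sequences $x_n, y_n$ with $d(x_n, y_n) \to 0$ and $|f(x_n) - f(y_n)| \ge \epsilon$. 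Properness of $(X,d)$ rules out the case where $\{x_n\}$ remains in a compact set (a convergent subsequence of $x_n$ would drag $y_n$ to the same limit, contradicting continuity of $f$), so after passing to a subsequence both $x_n$ and $y_n$ eventually leave every compact subset of $X$.

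With this preparation I would consider the scale
\[
\mathcal{U} = \{\{x_n, y_n\} : n \ge 1\} \cup \{\{x\} : x \notin \bigcup_n \{x_n, y_n\}\}.
\]
Each element is finite, only finitely many pairs meet any prescribed compact set, and for every $\delta' > 0$ cofinitely many pairs have diameter below $\delta'$; absorbing the remaining finitely many pairs into a compact $K$ verifies the $C_0$-condition, so $\mathcal{U}$ lies in $L_0(X)$. If $f$ were slowly oscillating, some compact $B$ would give $\diam(f(\{x_n, y_n\} \setminus B)) < \epsilon$ for every $n$; but eventually $\{x_n, y_n\} \cap B = \emptyset$, and the inequality $|f(x_n) - f(y_n)| \ge \epsilon$ forces a contradiction. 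Hence $f \notin \mathscr{C}$, completing the identification $\mathscr{C} = C_u(X)$. The main obstacle is precisely this last step — isolating the offending sequences from a non-uniformly continuous $f$ and manufacturing a genuine $C_0$-scale from them; the remaining reductions are formal.
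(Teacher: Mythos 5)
Your proposal is correct and takes essentially the same route as the paper: both reduce the statement to identifying the bounded continuous slowly oscillating functions on $L_0(X)$ with the bounded uniformly continuous (ss-continuous) functions on $X$, and both prove the nontrivial inclusion via sequences with $d(x_n,y_n)\to 0$, using properness and continuity when the pairs stay in a bounded set and slow oscillation applied to the scale of pairs $\{x_n,y_n\}$ when they escape to infinity. The only differences are organizational: you argue by contrapositive and verify explicitly that the pair family is a $C_0$-scale of $L_0(X)$, whereas the paper runs a direct three-case analysis with that scale left implicit.
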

\begin{proof}
It suffices to show that ss-continuous functions on $X$ and $S_0(L_0(X))$ are identical.

If $f:X\to \mathbb{C}$ is ss-continuous and bounded, then $f:L_0(X)\to \mathbb{C}$ is continuous, bounded, and slowly oscillating function 
by \ref{InclusionOfCIntoSLOfC} or directly from the definition of $L_0$.

Let $f:L_0(X)\to \mathbb{C}$ be a continuous, bounded, and slowly oscillating function  to $\mathbb{C}$.
Recall that $f$ is ss-continuous with respect to the metric on $X$ if, for every sequence $\{(x_n,y_n)\in X\times X\}_{n\ge 1}$ with $d(x_n,y_n)\to 0$, we have $d(f(x_n),f(y_n)) \to 0$. 

Let $\{(x_n,y_n)\in X\times X\}_{n\ge 1}$ with $d(x_n,y_n)\to 0$. 

\underline{Case 1:} The sequence of subsets $A_n = \{x_n,y_n\}$ is escaping every bounded set; i.e. for every bounded set $B$ there exists some $N\ge 1$ such that $A_n \cap B = \emptyset$ for all $n \ge N$. Because $f$ is slowly oscillating, we know there is $N \ge 1$ such that $\{\{f(x_n),f(y_n)\} : n \ge N\}$ has the diameter smaller than $\epsilon$. This implies that $d(f(x_n),f(y_n)) \to 0$.

\underline{Case 2:} $\bigcup\limits_{n}A_n$ lies in a bounded set $B$. Because $B$ is pre-compact and continuous functions are ss-continuous on pre-compact sets, we must have that $d(f(x_n),f(y_n)) \to 0$.

\underline{Case 3:} For every bounded set $B$ and $N\ge 1$, there exists $m,n \ge N$ such that $A_n \subset B$ and $A_m \cap B = \emptyset$. This means that $\{(x_n,y_n)\in X\times X\}_{n\ge 1}$ is a union of subsequences $\{(\tilde{x}_n,\tilde{y}_n)\in X\times X\}_{n\ge 1}$ such that $d(\tilde{x}_n,\tilde{y}_n)\to 0$ and $\{(\tilde{x}_n,\tilde{y}_n)\in X\times X\}_{n\ge 1}$ falls in Case 1 or in Case 2.
\end{proof}

\subsection{Hybrid large scale structures induced by C*-algebras}

\begin{Definition}
 Suppose $\mathscr{C}$ is a C*-subalgebra of $C^b(X)$. Its induced hybrid ls-space $HLS(\mathscr{C})$
 is $LS(SS(\mathscr{C}),\mathscr{B})$ with the topology induced from $SS(\mathscr{C})$ and $\mathscr{B}$ consisting of all pre-compact sets in that topology.
\end{Definition}

The following is a partial answer to Question \ref{BigQuestion2}.

\begin{Corollary}
If $X$ is a proper metric space, $\mathscr{SS}$ is its ss-structure, $\mathscr{B}$ is the bounded structure on $X$ consisting of all pre-compact sets, and $\mathscr{C}=C(X,\mathscr{SS})$, then
$$C(X,LS(\mathscr{C},\mathscr{B}))=\mathscr{C}.$$
\end{Corollary}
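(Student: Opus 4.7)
The plan is to derive this corollary as a two-line consequence of Proposition \ref{S0L0EqualsidProp} together with the generalized Stone-Weierstrass Theorem \ref{GenStoneWeierstrass}; no additional case analysis is required, since the serious work has already been carried out in proving \ref{S0L0EqualsidProp}.

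First I would abbreviate $\mathscr{D} := C(X, LS(\mathscr{C}, \mathscr{B}))$ and note that Proposition \ref{InclusionOfCIntoSLOfC} already supplies the forward inclusion $\mathscr{C} \subset \mathscr{D}$. In addition, $\mathscr{D}$ is a unital C*-subalgebra of $C^b(X)$ since constants are trivially continuous and slowly oscillating, so Theorem \ref{GenStoneWeierstrass} is available for $\mathscr{D}$. The task therefore reduces to establishing $\mathscr{D} \subset \mathscr{C}$.

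Next I would unfold the two functors. By construction, $L_0(X)$ is $X$ endowed with the ls-structure $LS(\mathscr{C}, \mathscr{B})$ and its induced metric topology, while $S_0(L_0(X))$ is $X$ endowed with the ss-structure $SS(\mathscr{D})$. Proposition \ref{S0L0EqualsidProp} asserts $X = S_0(L_0(X))$ as hss-spaces, which read at the level of ss-structures is exactly the identity $SS(\mathscr{D}) = \mathscr{SS}$.

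Finally, Theorem \ref{GenStoneWeierstrass} applied to the unital algebra $\mathscr{D}$ gives
$$\mathscr{D} = C(X, SS(\mathscr{D})) = C(X, \mathscr{SS}) = \mathscr{C},$$
which is the desired equality. The only possible obstacle is a bookkeeping check that the equality $X = S_0(L_0(X))$ of hss-spaces does entail equality of the underlying ss-structures $SS(\mathscr{D})$ and $\mathscr{SS}$, but this is immediate from the definition of $S_0$. The genuinely nontrivial content — that a bounded continuous slowly oscillating function on $L_0(X)$ is uniformly continuous for the metric of $X$ — lives in the three-case argument of \ref{S0L0EqualsidProp}, and here is reused rather than redone.
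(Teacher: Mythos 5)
Your reduction uses the same two ingredients the paper relies on (\ref{InclusionOfCIntoSLOfC} for $\mathscr{C}\subset\mathscr{D}$ and Proposition \ref{S0L0EqualsidProp} for the hard direction), but you cite the \emph{statement} of \ref{S0L0EqualsidProp} (equality of hss-spaces, hence of ss-structures) and then route through Theorem \ref{GenStoneWeierstrass}, whereas the paper's proof says ``as in the proof of \ref{S0L0EqualsidProp}'' and reuses the three-case argument directly. This difference matters, because the intermediate identity you extract, $SS(\mathscr{D})=\mathscr{SS}$, is stronger than what the proof of \ref{S0L0EqualsidProp} actually establishes, and it is false whenever $X$ is unbounded. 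Indeed, for any C*-subalgebra $\mathscr{A}\subset C^b(X)$ the structure $SS(\mathscr{A})$ is the restriction to $X$ of the unique ss-structure on a compactification (see the Remark after the definition of $SS(\mathscr{C})$): its basic scales are pullbacks of $\epsilon$-balls under finitely many bounded functions, and since the image of $X$ under such a finite family is totally bounded in $\mathbb{C}^n$, every scale in $SS(\mathscr{A})$ has a finite subcover. The metric ss-structure of a proper unbounded space does not have this property: for $X=\RR$ the scale of unit balls lies in $\mathscr{SS}$ but admits no finite subcover, so $\mathscr{SS}\neq SS(\mathscr{A})$ for \emph{any} $\mathscr{A}$, in particular $SS(\mathscr{D})\neq\mathscr{SS}$. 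What the proof of \ref{S0L0EqualsidProp} really shows (its opening line ``it suffices to show that ss-continuous functions \dots are identical'' is where the paper itself is loose) is the function-level identity: bounded ss-continuous $=$ bounded continuous slowly oscillating, i.e.\ exactly $\mathscr{C}=\mathscr{D}$.

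So your chain $\mathscr{D}=C(X,SS(\mathscr{D}))=C(X,\mathscr{SS})$ passes through a step that fails as literally stated, even though the conclusion is true; the gap is inherited from taking the proposition's ``$X=S_0(L_0(X))$'' at face value, and your remark that the ss-structure equality ``is immediate from the definition of $S_0$'' is precisely the point that cannot be salvaged. The repair is simple and is the paper's own proof: drop the detour through $SS(\mathscr{D})$ and Stone--Weierstrass, and cite what the Cases 1--3 argument of \ref{S0L0EqualsidProp} proves directly, namely that every bounded continuous slowly oscillating function on $(X,LS(\mathscr{C},\mathscr{B}))$ is ss-continuous for the metric; combined with \ref{InclusionOfCIntoSLOfC} this gives $C(X,LS(\mathscr{C},\mathscr{B}))=\mathscr{C}$ without ever asserting equality of ss-structures.
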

\begin{proof} As in the proof of \ref{S0L0EqualsidProp} one can see that every continuous and slowly oscillating function on $X$ is ss-continuous with respect to the ss-structure on $X$.
\end{proof}

\section{Noncommutative case}
Recall that a $C^*$-algebra is a norm closed and star closed sub-algebra of $B(\mathcal{H})$ (the bounded operators on the Hilbert space $\mathcal{H}$) for some Hilbert space $\mathcal{H}$. If $X$ is a set, then we will exclusively use the orthonormal basis of $\ell^2(X)$ given by the functions $\delta_x$ where $x$ ranges over $X$ (recall that $\delta_x = \chi_{\{x\}}$, the characteristic function of $\{x\}$). Every operator $a\in B(\ell^2(X))$ has a representation as an $X$ by $X$ matrix $(a_{x,y})_{x,y\in X}$ where $a_{x,y} = \langle a(\delta_x),\delta_y\rangle.$ 

The purpose of this section is to construct functors which associate scale structures to $C^*$-algebras and vice versa. There are challenges in noncommutative case that do not arise in the commutative one. For example, every commutative $C^*$-algebra $A$ is isomorphic to $C_0(X_A)$ where $X_A$ is a locally compact Hausdorff space, it is the Gel'fand spectrum on $A$, and if $A \cong C_0(Y)$ for some locally compact Hausdorff space $Y$, then it is necessarily the case that $X$ and $Y$ are homeomorphic.

\subsection{Scales, partitions of unity, and bounded operators}

A \textbf{partition of unity} on a set $X$ is a function $\phi:X\to l_1(V)$ such that $\delta_v(\phi(x))\ge 0$
for each $v\in V$  and $\sum\limits_{v\in V}\delta_v(\phi(x))=1$ for each $x\in X$. Each partition of unity on $X$ determines a scale on $X$ as follows:

\begin{Definition}
 The \textbf{support} of a partition of unity $\phi:X\to l_1(V)$ is the family of supports of functions
 $\delta_v\circ \phi$, $v\in V$, i.e. the family $\{S_v\}_{v\in V}$, where $S_v:=\{x\in X \mid \delta_v(\phi(x)) > 0\}$.
\end{Definition}

In contrast, each operator $a\in B(\ell^2(X))$ determines an entourage
$\{(x,y)\in X\times X\mid  \langle a(\delta_x),\delta_y\rangle > 0\}$.

\begin{Lemma}\label{PUsInduceGoodPUs}
 Suppose  $\phi:X\to l_1(V)$ is a partition of unity with support $\mathcal{U}$.
 There is a partition of unity $\psi:X\to l_1(X)$ whose support coarsens $\mathcal{U}$
 and refines $st(\mathcal{U},\mathcal{U})$. If $\phi$ is small scale continuous with respect to an ss-structure on $X$, then $\psi$ may be chosen small scale continuous as well.
\end{Lemma}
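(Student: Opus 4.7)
The plan is to reindex $\phi$ from $V$ to $X$ by choosing, for each $v \in V$, a distinguished representative point in the support $S_v$, and then aggregating weights that share a common representative. This trades the abstract indexing set $V$ for $X$ itself while enlarging each support set only by what meets it, which is exactly the tolerance the lemma allows.

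Concretely, I would first (discarding the harmless case $S_v = \emptyset$) invoke the Axiom of Choice to fix a function $c : V \to X$ with $c(v) \in S_v$ for every $v \in V$, and then define $\psi : X \to l_1(X)$ by
$$ \psi(x)(y) \,=\, \sum_{v \in c^{-1}(y)} \phi(x)(v), $$
with the convention that the empty sum is zero. Since $\{c^{-1}(y)\}_{y \in X}$ partitions $V$, we get $\sum_{y \in X} \psi(x)(y) = \sum_{v \in V} \phi(x)(v) = 1$, so $\psi$ is indeed a partition of unity. Its support sets are $T_y = \bigcup_{v \in c^{-1}(y)} S_v$. The inclusion $S_v \subseteq T_{c(v)}$ shows immediately that $\mathcal{U}$ refines $\{T_y\}$, i.e.\ the support of $\psi$ coarsens $\mathcal{U}$.

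For the refinement into $st(\mathcal{U},\mathcal{U})$, whenever $T_y$ is nonempty I pick any $v_0 \in c^{-1}(y)$, so that $y \in S_{v_0}$; every other $v \in c^{-1}(y)$ also satisfies $y \in S_v$ by the defining property of $c$, so $S_v \cap S_{v_0} \ni y$ and hence $S_v \subseteq st(S_{v_0},\mathcal{U})$. Taking the union over $v \in c^{-1}(y)$ gives $T_y \subseteq st(S_{v_0},\mathcal{U}) \in st(\mathcal{U},\mathcal{U})$, as required. For the small scale continuity clause, I would use
$$ \|\psi(x) - \psi(x')\|_{l_1(X)} \,\leq\, \sum_{y \in X} \sum_{v \in c^{-1}(y)} |\phi(x)(v) - \phi(x')(v)| \,=\, \|\phi(x) - \phi(x')\|_{l_1(V)}, $$
obtained by the triangle inequality inside each $y$-sum followed by reindexing along the partition $\{c^{-1}(y)\}_{y \in X}$. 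Thus $\psi$ is a $1$-Lipschitz contraction of $\phi$ into the $l_1$ metric, so the preimage under $\psi$ of any s-scale on $l_1(X)$ contains the preimage under $\phi$ of a matching s-scale on $l_1(V)$, and ss-continuity of $\psi$ follows from ss-continuity of $\phi$.

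The only genuinely nonroutine step is the second refinement condition, where the choice $c(v) \in S_v$ does real work: it is exactly this constraint that forces all the $S_v$ with $c(v) = y$ to share the common point $y$ and therefore fit inside a single star $st(S_{v_0},\mathcal{U})$. If instead one picked representatives outside the supports, the enlargement would not be controlled by $st(\mathcal{U},\mathcal{U})$. Everything else is bookkeeping about sums and indices.
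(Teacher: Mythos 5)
Your proposal is correct and is essentially the paper's own argument: the paper also fixes a choice $s(v)\in S_v$ for each $v$ with nonempty support and takes $\psi=t\circ\phi$, where $t:l_1(V_0)\to l_1(X)$ is the induced norm-one pushforward, which is exactly your aggregation $\psi(x)(y)=\sum_{v\in c^{-1}(y)}\phi(x)(v)$. You merely spell out the coarsening/star-refinement and Lipschitz verifications that the paper leaves implicit.
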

\begin{proof}
 Consider the subset $V_0$ of $V$  consisting of all $v\in V$ such that there is $s(v)\in X$
 satisfying $\phi(x)(s(v)) > 0$. Notice the image of $\phi$ is contained in $l_1(V_0)$. The function $s:V_0\to X$ induces a linear function $t:l_1(V_0)\to l_1(X)$ of norm $1$
 and we define $\psi$ as $t\circ \phi$.
\end{proof}

Notice the connection between partitions of unity and and bounded operators in $B(l_2(X))$:
\begin{Lemma}\label{PUsInduceBoundedOperators}
 Every partition of unity 
 $\phi:X\to l_1(X)$ induces a bounded operator $l_2(X)\to l_2(X)$ whose restriction to $X$ equals $\phi$.
\end{Lemma}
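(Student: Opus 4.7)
My plan is to define $T : l_2(X) \to l_2(X)$ matricially from $\phi$ and then verify boundedness using Schur's test.

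First, I would set $T\delta_x := \phi(x)$ for each $x \in X$; this makes sense as an element of $l_2(X)$ since the nonnegative entries of $\phi(x)$ sum to $1$, and therefore $\sum_y \phi(x)(y)^2 \leq \sum_y \phi(x)(y) = 1$, giving $\|\phi(x)\|_2 \leq 1$. Extending linearly yields a densely-defined operator on the subspace of finitely supported vectors, with matrix entries $a_{x,y} = \langle T\delta_x, \delta_y\rangle = \phi(x)(y) \geq 0$ and row sums $\sum_y a_{x,y} = 1$ by the partition-of-unity condition.

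Second, I would establish boundedness by a weighted Cauchy--Schwarz computation (the quantitative form of Schur's test). For $f = \sum_x c_x \delta_x$ with finite support, pulling a $\sqrt{\phi(x)(y)}$ in and out gives
\[
\|Tf\|_2^2 = \sum_y \Bigl| \sum_x c_x \phi(x)(y) \Bigr|^2 \leq \sum_y \Bigl( \sum_x |c_x|^2 \phi(x)(y) \Bigr) \Bigl( \sum_x \phi(x)(y) \Bigr) \leq C \sum_x |c_x|^2 = C\,\|f\|_2^2,
\]
where the last step uses $\sum_y \phi(x)(y) = 1$ after swapping the order of summation, and $C := \sup_y \sum_x \phi(x)(y)$. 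This shows $\|T\| \leq \sqrt{C}$, and once bounded on the dense subspace of finitely supported vectors, $T$ extends uniquely by continuity to all of $l_2(X)$. By construction its action on the orthonormal basis $\{\delta_x\}_{x\in X}$ is exactly $\phi$, so its restriction to the copy of $X$ inside $l_2(X)$ equals $\phi$.

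The main obstacle is verifying that the column sum bound $C < \infty$ actually holds; the row condition $\sum_y a_{x,y} = 1$ is built into the definition of a partition of unity, but the symmetric condition on columns is not automatic (a constant partition $\phi(x) \equiv \delta_{x_0}$ shows this). I would therefore expect the genuine content of the lemma to be an implicit hypothesis such as uniform bounded multiplicity of the support cover of $\phi$ (so that only finitely many $\phi(x)$ can be nonzero at any given $y$ with controlled total mass), and I would make this condition explicit in the write-up, noting that in the anticipated applications of the lemma the partitions of unity furnished by Lemma \ref{PUsInduceGoodPUs} will satisfy it.
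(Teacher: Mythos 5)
Your construction and the Schur-test estimate are correct as far as they go, and your main observation is the important one: the paper offers no proof of this lemma at all, and as stated the lemma is false for infinite $X$. The partition-of-unity axioms only control the ``row'' sums $\sum_y\phi(x)(y)=1$, which makes $T\delta_x=\phi(x)$ a contraction on $l_1(X)$ but says nothing about $l_2(X)$; your constant example $\phi(x)\equiv\delta_{x_0}$ is a genuine counterexample, since then $Tf=\bigl(\sum_x f(x)\bigr)\delta_{x_0}$ and the functional $f\mapsto\sum_x f(x)$ is unbounded on $l_2(X)$ because $\mathbf{1}\notin l_2(X)$. So some hypothesis guaranteeing $C=\sup_y\sum_x\phi(x)(y)<\infty$ must be added, and the omission is not harmless: the lemma is exactly what is invoked later when the paper asserts that an ss-continuous partition of unity ``induces an element of $C^\ast(\mathscr{SS})$'' in the theorem on ss-strongly paracompact structures, and the definition of ss-strong paracompactness does not by itself supply the needed bound.

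Two corrections to your proposed repair. First, ``uniformly bounded multiplicity of the support cover of $\phi$'' is the wrong condition if ``support'' is taken in the paper's sense, namely the family $\{S_v\}_v$ with $S_v=\{x:\phi(x)(v)>0\}$: the multiplicity of that family at $x$ counts the nonzero coordinates of $\phi(x)$, and your own counterexample has multiplicity one. What your parenthetical actually describes, and what the Schur test needs, is the transposed condition: a uniform bound on $\#\{x:\phi(x)(y)>0\}$ over $y\in X$ (equivalently, uniformly bounded cardinality of the sets $S_y$, i.e.\ bounded multiplicity of the family $\{\mathrm{supp}\,\phi(x)\}_{x\in X}$), which gives $C\le\sup_y\#S_y<\infty$ because every entry is at most $1$; alternatively one can simply postulate $C<\infty$. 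Second, do not expect Lemma \ref{PUsInduceGoodPUs} to furnish this for free: the reindexing $\psi=t\circ\phi$ constructed there preserves no multiplicity or column-sum control (the constant partition again passes through it unchanged), so if you add the hypothesis you must carry it explicitly through the later applications.
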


\subsection{Large scale structures induced by C*-algebras}
Roe \cite{Roe} defined the uniform C*-algebra $C^*_u(X)$ in case of metric spaces $X$ as the one generated by
bounded operators whose support is a controlled set in the coarse structure induced by the metric of $X$.
Obviously, the same definition works for any coarse space $X$.

A reverse process is to define
large scale structures on a set $X$ given a C*-subalgebra $\mathcal{A}\subset  B(\ell^2(X))$.
We will do it in a manner analogous to the construction of word metrics on finitely generated groups.

\begin{Definition}
A family of bounded operators $\mathcal{F}\subset  B(\ell^2(X))$ is *-symmetric if $a^*\in \mathcal{F}$
for each $a\in \mathcal{F}$.
\end{Definition}

\begin{Definition}
 Given a *-symmetric family of bounded operators $\mathcal{F}\subset  B(\ell^2(X))$ and a cover $\mathcal{U}$ of $X$, $\mathcal{U}$ is called $\mathcal{F}$-\textbf{bounded} if there is $n\ge 1$ such that for all $x\ne y$ belonging to some $U\in \mathcal{U}$ there is a chain $x_1=x, \ldots, x_n=y$
 of points in $U$ so that for each $i < n$ either $x_i=x_{i+1}$ or 
 $|\langle a(\delta_{x_i}),\delta_{x_{i+1}}\rangle| \ge 1$ for some $a\in\mathcal{F}$.
\end{Definition}

\begin{Remark}
 One can define a pseudo-metric here.
\end{Remark}

\begin{Definition}
 Given a unital *-subalgebra $\mathcal{A}\subset  B(\ell^2(X))$ define the ls-structure $\mathscr{LS}(\mathcal{A})$ on $X$ by considering all $\mathcal{F}$-\textbf{bounded} covers of $X$, where $\mathcal{F}$ is a finite *-symmetric subset of $\mathcal{A}$. 
 \end{Definition}

\begin{Proposition}\label{ClosureGivesTheSameLS}
A unital *-subalgebra $\mathcal{A}\subset  B(\ell^2(X))$ induces the same large scale structure on $X$ as its closure.
\end{Proposition}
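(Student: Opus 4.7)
The plan is to prove the two inclusions $\mathscr{LS}(\mathcal{A}) \subseteq \mathscr{LS}(\overline{\mathcal{A}})$ and $\mathscr{LS}(\overline{\mathcal{A}}) \subseteq \mathscr{LS}(\mathcal{A})$ separately, where $\overline{\mathcal{A}}$ denotes the norm closure of $\mathcal{A}$ in $B(\ell^2(X))$.

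The first inclusion is immediate: any finite *-symmetric subset $\mathcal{F}\subset \mathcal{A}$ is also a finite *-symmetric subset of $\overline{\mathcal{A}}$, so every $\mathcal{F}$-bounded cover remains $\mathcal{F}$-bounded when $\mathcal{F}$ is viewed inside the larger algebra. Thus every $\mathscr{LS}(\mathcal{A})$-scale is automatically an $\mathscr{LS}(\overline{\mathcal{A}})$-scale.

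For the reverse inclusion, fix a finite *-symmetric set $\mathcal{F} = \{a_1,\ldots,a_k\}\subset \overline{\mathcal{A}}$ and a cover $\mathcal{U}$ which is $\mathcal{F}$-bounded with constant $n$. The idea is to replace each $a_i$ by a close enough approximation $a_i' \in \mathcal{A}$ (rescaled) so that the matrix entries of interest remain uniformly bounded away from $0$. Concretely, I would choose for each $i$ an element $a_i' \in \mathcal{A}$ with $\|a_i - a_i'\| < 1/2$, taking care that whenever $a_j = a_i^*$ in $\mathcal{F}$ we set $a_j' := (a_i')^*$; this is possible because $\mathcal{A}$ is a *-subalgebra. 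Then define $\mathcal{F}' := \{2a_1',\ldots,2a_k'\}$, a finite *-symmetric subset of $\mathcal{A}$.

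The key estimate is the standard matrix-coefficient bound
\[
\bigl|\langle a_i(\delta_x),\delta_y\rangle - \langle a_i'(\delta_x),\delta_y\rangle\bigr|
\le \|a_i - a_i'\|\,\|\delta_x\|\,\|\delta_y\| < 1/2,
\]
so that $|\langle a_i(\delta_x),\delta_y\rangle|\ge 1$ forces $|\langle 2a_i'(\delta_x),\delta_y\rangle| \ge 1$. Consequently, any chain witnessing that $\mathcal{U}$ is $\mathcal{F}$-bounded with constant $n$ witnesses, verbatim, that $\mathcal{U}$ is $\mathcal{F}'$-bounded with the same constant $n$, proving $\mathcal{U} \in \mathscr{LS}(\mathcal{A})$. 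The only mild subtlety to watch is preserving *-symmetry of the approximants, which is handled by the pairing convention above; the scaling factor $2$ is chosen purely so that the threshold $|\cdot|\ge 1$ in the definition of $\mathcal{F}'$-boundedness is recovered from the $1/2$ loss incurred by norm approximation.
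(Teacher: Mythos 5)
Your proof is correct and is essentially the paper's argument: approximate each element of the finite *-symmetric set $\mathcal{F}\subset \mathrm{cl}(\mathcal{A})$ by an element of $\mathcal{A}$ to within $1/2$ in norm and rescale so that the matrix-coefficient threshold $\ge 1$ is preserved along the same chains, the trivial inclusion $\mathscr{LS}(\mathcal{A})\subseteq\mathscr{LS}(\mathrm{cl}(\mathcal{A}))$ being immediate. If anything, your write-up is more careful than the paper's one-line version (which phrases the key implication in the form ``any $0.5\cdot\mathcal{G}$-bounded cover is $\mathcal{F}$-bounded''), since you explicitly arrange *-symmetry of the approximants and state the transfer in the direction actually needed.
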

\begin{proof}
 Given a finite *-symmetric subset $\mathcal{F}$ of $cl(\mathcal{A})$ choose
 a finite *-symmetric subset $\mathcal{G}$ of $\mathcal{A}$ such that for any $a \in \mathcal{F}$
 there is $b\in \mathcal{G}$ satisfying $|a-b| < 0.5$. Notice that any $0.5\cdot \mathcal{G}$-bounded cover of $X$
 is $\mathcal{F}$-bounded.
\end{proof}

\begin{Proposition}\label{C*IsSmaller}
 If $(X,\mathscr{LS})$ is a large scale space, then the ls-structure induced by the uniform Roe algebra $C^*_u(X)$ is smaller than or equal the original ls-structure of $X$.
 \end{Proposition}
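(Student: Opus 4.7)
My plan is to reduce from the full uniform Roe algebra $C^*_u(X)$ to its dense $*$-subalgebra of operators with controlled support, and then translate the chain condition in the definition of $\mathcal{F}$-bounded cover into a statement about controlled entourages in the original coarse structure. The two ingredients I would invoke are Proposition \ref{ClosureGivesTheSameLS} (to replace closure by dense subalgebra) and the equivalence between scales and entourages (uniformly bounded covers correspond to covers $\mathcal{U}$ for which $\bigcup_{U \in \mathcal{U}} U \times U$ is controlled).

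First I would let $\mathcal{A}_0 \subset B(\ell^2(X))$ denote the unital $*$-subalgebra generated by operators with controlled support in $\mathscr{LS}$; by definition $C^*_u(X) = \overline{\mathcal{A}_0}$, so Proposition \ref{ClosureGivesTheSameLS} gives $\mathscr{LS}(C^*_u(X)) = \mathscr{LS}(\mathcal{A}_0)$. A routine check (using that controlled sets are closed under finite union, inverses, and composition) shows every element of $\mathcal{A}_0$ has controlled support: for $a \in \mathcal{A}_0$ the set $\mathrm{supp}(a) := \{(x,y) : \langle a(\delta_x), \delta_y\rangle \neq 0\}$ is controlled, because sum, product, and adjoint of operators with controlled support again have controlled support. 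Consequently, for any $a \in \mathcal{A}_0$, the set $\{(x,y) : |\langle a(\delta_x), \delta_y\rangle| \geq 1\} \subset \mathrm{supp}(a)$ is controlled.

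Now take any $\mathcal{U} \in \mathscr{LS}(C^*_u(X))$. By the reduction above, $\mathcal{U}$ is $\mathcal{F}$-bounded for some finite $*$-symmetric $\mathcal{F} = \{a_1, \ldots, a_k\} \subset \mathcal{A}_0$, with chain length bound $n$. Set
\[
E \;:=\; \Delta_X \;\cup\; \bigcup_{i=1}^{k} \bigl\{(x,y) : |\langle a_i(\delta_x), \delta_y\rangle| \geq 1\bigr\}.
\]
Then $E$ is a controlled set in $\mathscr{LS}$ (finite union of controlled sets, together with the always-controlled diagonal). For any $U \in \mathcal{U}$ and any $x, y \in U$, the chain condition says there exist $x = x_1, \ldots, x_n = y$ in $U$ with $(x_i, x_{i+1}) \in E$ for every $i < n$, so $(x,y) \in E^{n-1}$ (the $(n-1)$-fold composition). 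Hence $\bigcup_{U \in \mathcal{U}} U \times U \subset E^{n-1}$, which is controlled because coarse structures are closed under composition. By the scale/entourage equivalence noted in Section \ref{ent}, $\mathcal{U}$ is uniformly bounded in $\mathscr{LS}$, which is exactly the desired inequality of scale structures.

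The only genuine subtlety — and the one step I would check carefully — is the passage from norm approximation of operators to preservation of the threshold-$1$ chain condition; this is precisely what Proposition \ref{ClosureGivesTheSameLS} has already packaged via the rescaling $0.5 \cdot \mathcal{G}$, which is why I apply it at the outset rather than approximating by hand. After that, everything is bookkeeping with the closure properties of controlled sets.
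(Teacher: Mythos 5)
Your proof is correct and follows essentially the same route as the paper: reduce via Proposition \ref{ClosureGivesTheSameLS} to a finite $*$-symmetric family with controlled supports, then use the chain bound $n$ to conclude uniform boundedness --- the paper phrases the last step as containment of $\mathcal{U}$ in the $n$-th star of a uniformly bounded cover $\mathcal{V}$, while you phrase it as containment of $\bigcup_{U\in\mathcal{U}} U\times U$ in the controlled composition $E^{n-1}$, which is the same argument in entourage language. Your explicit check that sums, products, and adjoints of controlled-support operators again have controlled support is a point the paper compresses into ``we may assume,'' so no gap either way.
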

\begin{proof}
Suppose $\mathcal{U}$ is $\mathcal{F}$-\textbf{bounded} for some finite *-symmetric subset  $\mathcal{F}$ of $C^*_u(X)$ with respect to an integer $n > 1$. In view of \ref{ClosureGivesTheSameLS} we may assume supports of elements of $\mathcal{F}$ are controlled sets
in $(X,\mathscr{LS})$.
There is a uniformly bounded cover $\mathcal{V}$ of $X$ such that $\{x,y\}$ not contained in any element of $\mathcal{V}$
 implies $|\langle a(\delta_{x}),\delta_{y}\rangle| =0$ for all $a\in \mathcal{F}$.
That implies easily that $\mathcal{U}$ is contained in the $n$-th star of $\mathcal{V}$, hence is uniformly bounded in the original structure of $X$.
\end{proof}

\begin{Proposition}\label{C*ContainsCoversOfFiniteMultiplicity}
 If $(X,\mathscr{LS})$ is a large scale space  then the ls-space induced by the uniform Roe algebra $C^*_u(X)$ contains all uniformly bounded covers in the original ls-structure of $X$ that have finite multiplicity.
 \end{Proposition}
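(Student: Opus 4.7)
The plan is to produce a single self-adjoint operator $T\in C^*_u(X)$ whose matrix entries $\langle T(\delta_x),\delta_y\rangle$ have modulus at least $1$ on every pair $(x,y)$ lying in a common element of $\mathcal{U}$. Once this is done, $\mathcal{F}:=\{T\}$ is a finite $*$-symmetric subset of $C^*_u(X)$, and for any $x\ne y$ contained in some $U\in\mathcal{U}$ the length-two chain $x_1=x$, $x_2=y$ lies in $U$ and satisfies $|\langle T(\delta_{x_1}),\delta_{x_2}\rangle|\ge 1$. Hence $\mathcal{U}$ is $\mathcal{F}$-bounded with $n=2$ and therefore $\mathcal{U}\in\mathscr{LS}(C^*_u(X))$.

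To construct $T$, let $k$ be an upper bound on the multiplicity of $\mathcal{U}$ and set $E:=\bigcup_{U\in\mathcal{U}}U\times U$. By the scale--entourage correspondence recalled in \S\ref{ent}, $E$ is a controlled set because $\mathcal{U}$ is uniformly bounded. Define $T\in B(\ell^2(X))$ by $T_{xy}:=1$ if $(x,y)\in E$ and $T_{xy}:=0$ otherwise; then $T=T^*$ and the support of $T$ is the controlled set $E$. To finish it only remains to verify that $T$ defines a bounded operator on $\ell^2(X)$, for then $T\in C^*_u(X)$ by the very definition of the uniform Roe algebra. I would apply the Schur test: the $x$-th row sum of $|T|$ equals $|E[x]|=\bigl|\bigcup_{U\ni x}U\bigr|$, a union of at most $k$ elements of $\mathcal{U}$, and the symmetry of $E$ gives the same estimate for column sums. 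In the bounded-geometry regime natural to $C^*_u(X)$, where elements of uniformly bounded covers have uniformly bounded cardinality $M$, this yields $\|T\|\le kM$ and completes the argument.

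The main obstacle is precisely this boundedness step. Without an a priori bound on $|U|$, the candidate $T=\chi_E$ need not be bounded, and the single-operator strategy collapses. In that case I would instead decompose $E$ into finitely many controlled subsets of uniformly bounded fibres -- a proper-coloring-of-the-intersection-pattern argument made available by the finite multiplicity hypothesis -- take the corresponding $\{0,1\}$-valued matrices (each bounded by Schur, and hence in $C^*_u(X)$) together with their adjoints as $\mathcal{F}$, and join any $x\ne y$ in a common $U$ by a chain whose length is controlled by the number of pieces in the decomposition. This coloring step is where the finite multiplicity assumption does its real work.
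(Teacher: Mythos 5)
Your first construction is essentially sound in the regime where it applies, but it is not the paper's route: the paper does not take $\chi_E$ for $E=\bigcup_{U\in\mathcal{U}}U\times U$. Instead it picks a hub point $x(U)\in U$ for each $U$ and defines $T(\delta_y)=\sum_{U\ni y}\delta_{x(U)}$; finite multiplicity $m$ guarantees at most $m$ nonzero entries in each column, the support of $T$ is controlled, and $\mathcal{U}$ is $\{T,T^*\}$-bounded by routing any pair $x\ne y\in U$ through the short chain $x\to x(U)\to y$ inside $U$. That hub trick is the idea your proposal is missing: it uses the multiplicity hypothesis directly, rather than through a Schur estimate on rows of length $|U|$, which is why you are forced to assume a uniform bound $M$ on the cardinalities $|U|$ (bounded geometry) to make $\chi_E$ bounded.

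The genuine gap is your fallback. Finite multiplicity bounds how many elements of $\mathcal{U}$ pass through a point, not how many points an element contains, and it is the latter that governs the fibres of $E$: for $x\in U$ one has $U\subseteq E[x]$, so if $\sup_{U\in\mathcal{U}}|U|=\infty$ (e.g. $\mathbb{R}$ with the cover by intervals $[n,n+1]$, which has multiplicity $2$) then $E$ cannot be written as a finite union of sets with uniformly bounded fibres, since fibres of a finite union are finite unions of fibres; no coloring of the nerve changes this, because a single block $U\times U$ already has huge fibres. Nor can longer chains rescue the strategy: for any bounded operator $a$ and fixed $x$, the number of $y$ with $|\langle a(\delta_x),\delta_y\rangle|\ge 1$ is at most $\|a\|^2$, so the set of points reachable from $x$ by chains of length $n$ through a finite $*$-symmetric family $\mathcal{F}$ has cardinality bounded independently of $x$; consequently a cover with elements of unbounded size is never $\mathcal{F}$-bounded for any such $\mathcal{F}$. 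So the proposition genuinely lives in the uniformly locally finite setting; to be fair, the paper's ``clearly $T$ is a bounded operator'' also tacitly uses a uniform bound on $|U|$ (unit vectors spread over $N$ points of a single $U$ give $\|T\|\ge\sqrt{N}$), but within that setting its hub argument closes the proof with one operator and chains of bounded length, whereas your coloring step cannot be carried out and should be replaced by the hub construction.
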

\begin{proof}
Suppose $\mathcal{U}$ is a uniformly bounded cover in $X$ of multiplicity $m$. For each $U\in \mathcal{U}$ pick $x(U)\in U$. Let $T:\ell^2(X) \to \ell^2(X)$ be given by 
$T(\delta_y) = \sum\limits_{y\in U\in \mathcal{U}} \delta_{x(U)}$. Clearly $T$ is a bounded operator. Notice that $\mathcal{U}$ is $\{T,T^*\}$-bounded
for $n=2$. 
\end{proof}

The following question is related to a question from \cite{WinZach} where it is shown that
the nuclear dimension of the uniform Roe algebra does not exceed the asymptotic dimension of $X$ (see \cite{GuWiYu} for an alternative proof).

\begin{Question}\label{NuclearDimVsAsdim}
 Suppose $\mathcal{A}\subset  B(\ell^2(X))$ is a unital $C^*$-subalgebra of finite nuclear dimension.
 Is the asymptotic dimension of the induced large scale structure on $X$ finite?
\end{Question}

Notice H.Sako \cite{Sako} proved that nuclearity of the uniform Roe algebra is equivalent to $X$ having Property A in case
of $X$ being uniformly locally finite. Therefore one may ponder a similar question to \ref{NuclearDimVsAsdim} for Property A.

\begin{Theorem}
 If $(X,\mathscr{LS})$ is a coarsely finitistic large scale space, then the ls-space induced by the uniform Roe algebra $C^*_u(X)$ yields the original ls-structure of $X$.
 \end{Theorem}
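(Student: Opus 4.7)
The plan is to establish $\mathscr{LS}(C^*_u(X))=\mathscr{LS}$ by mutual inclusion of the two scale structures, treating Propositions~\ref{C*IsSmaller} and \ref{C*ContainsCoversOfFiniteMultiplicity} as the two pillars of the argument. The first already supplies the inclusion $\mathscr{LS}(C^*_u(X))\subseteq \mathscr{LS}$ without any hypothesis on $X$, so only the reverse inclusion requires work, and the coarsely finitistic hypothesis is precisely what bridges the gap between arbitrary uniformly bounded covers and the finite-multiplicity ones controlled by Proposition~\ref{C*ContainsCoversOfFiniteMultiplicity}.

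For the reverse inclusion, I would fix an arbitrary uniformly bounded cover $\mathcal{V}\in\mathscr{LS}$. Interpreting \emph{coarsely finitistic} in the standard way (each uniformly bounded cover can be coarsened by a uniformly bounded cover of finite multiplicity, in analogy with the asymptotic-dimension formulation), I would produce a uniformly bounded cover $\mathcal{U}\in\mathscr{LS}$ of finite multiplicity such that $\mathcal{V}$ refines $\mathcal{U}$. Proposition~\ref{C*ContainsCoversOfFiniteMultiplicity} then places $\mathcal{U}$ inside $\mathscr{LS}(C^*_u(X))$, and because any ls-structure is closed under refinements, the refinement $\mathcal{V}$ lies in $\mathscr{LS}(C^*_u(X))$ as well. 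This yields $\mathscr{LS}\subseteq\mathscr{LS}(C^*_u(X))$ and completes the proof.

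The only obstacle I foresee is conceptual rather than analytic: one must pin down the definition of coarsely finitistic so that the finite-multiplicity coarsening $\mathcal{U}$ is itself uniformly bounded in the original $\mathscr{LS}$ (and not merely in some enlarged structure), which is the convention consistent with the asymptotic-dimension literature. The operator-theoretic content of the theorem has already been extracted in Proposition~\ref{C*ContainsCoversOfFiniteMultiplicity}, where a bounded operator $T\in C^*_u(X)$ witnessing $\mathcal{F}$-boundedness of a finite-multiplicity cover was constructed by picking a representative $x(U)\in U$ for each element $U$ of the cover. With that lemma in hand, the theorem becomes a one-line combinatorial assembly: coarsely finitistic produces the finite-multiplicity witness, Proposition~\ref{C*ContainsCoversOfFiniteMultiplicity} injects it into $\mathscr{LS}(C^*_u(X))$, and closure under refinement transfers this to the original cover.
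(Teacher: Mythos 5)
Your proposal is correct and matches the paper's own argument, which likewise combines the definition of coarsely finitistic with Propositions \ref{C*ContainsCoversOfFiniteMultiplicity} and \ref{C*IsSmaller}; you have merely spelled out the closure-under-refinement step that the paper leaves implicit.
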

\begin{proof}
$(X,\mathscr{LS})$ being coarsely finitistic (see \cite{CDV2})  means that each uniformly bounded cover of $X$ can be coarsened to a uniformly bounded cover of $X$ that is of finite multiplicity.
Apply \ref{C*ContainsCoversOfFiniteMultiplicity} and \ref{C*IsSmaller}.
\end{proof}

\begin{Corollary}
 If $(X,\mathscr{LS})$ is a large scale space of bounded geometry or of finite asymptotic dimension, then the ls-space induced by the uniform Roe algebra $C^*_u(X)$ yields the original ls-structure of $X$.
 \end{Corollary}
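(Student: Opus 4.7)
The plan is to reduce the corollary to the preceding theorem by showing that both hypotheses — bounded geometry and finite asymptotic dimension — imply the coarsely finitistic condition. Once that reduction is in place, the uniform Roe algebra $C^*_u(X)$ recovers the original ls-structure by the previous theorem, which combines \ref{C*ContainsCoversOfFiniteMultiplicity} and \ref{C*IsSmaller}.

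For the case of finite asymptotic dimension, the implication is essentially tautological. If $\asdim(X)\le n$ then, by one of the standard equivalent formulations, every uniformly bounded cover $\mathcal{U}$ admits a uniformly bounded coarsening $\mathcal{V}$ of multiplicity at most $n+1$. That is exactly the coarsely finitistic property as defined in \cite{CDV2}, so this case requires no additional argument beyond citing the definition.

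For the case of bounded geometry, I would produce a finite-multiplicity coarsening of an arbitrary uniformly bounded cover $\mathcal{U}$ by a net construction. Fix a scale (or a diameter bound $D$) controlling $\mathcal{U}$ and pick a maximal $D$-separated subset $S\subset X$; by maximality, the family $\mathcal{V}=\{B(s,2D):s\in S\}$ covers $X$, and each $U\in\mathcal{U}$ is contained in some element of $\mathcal{V}$ (choose any $u\in U$ and a witness $s\in S$ with $d(u,s)<D$). So $\mathcal{V}$ is a uniformly bounded coarsening of $\mathcal{U}$. The multiplicity is controlled by bounded geometry: if $x\in B(s,2D)$ for several $s\in S$, then those $s$ lie in $B(x,2D)$ and are pairwise $D$-separated, so their number is bounded by the uniform bound on points in a $2D$-ball. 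In the non-metric setting the same argument runs with an uniformly bounded entourage in place of $D$, using the standard definition of bounded geometry.

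The proof itself is mostly invocation of a prior theorem, and the only mild obstacle is the bounded-geometry half, where one must be careful that the net coarsening really is a coarsening in the sense of the paper (every nonsingleton element of $\mathcal{U}$ sits inside an element of $\mathcal{V}$) and that the finite multiplicity comes from the correct $2D$-scale rather than the $D$-scale used for separation. No new inequality or C*-algebraic input is needed.
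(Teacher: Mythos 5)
Your proposal is correct and follows the paper's intended route exactly: the corollary is an immediate consequence of the preceding theorem once one notes that bounded geometry and finite asymptotic dimension each imply the coarsely finitistic property (the paper leaves this reduction implicit, and your net argument for the bounded geometry case and the standard multiplicity-$(n+1)$ formulation of $\asdim$ fill it in correctly).
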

 
 \subsection{Small scale structures induced by C*-algebras}
 
 \begin{Definition}
 Given a *-subalgebra $\mathcal{A}\subset  B(\ell^2(X))$ define the small scale structure $\mathscr{SS}(\mathcal{A})$ on $X$ 
 as the smallest ss-structure for which all functions $X\to l_2(X)$ induced by $a\in \mathcal{A}$ are ss-continuous.
  \end{Definition}

 \begin{Proposition}\label{ClosureGivesTheSameSS}
A *-subalgebra $\mathcal{A}\subset  B(\ell^2(X))$ induces the same small scale structure on $X$ as its closure.
\end{Proposition}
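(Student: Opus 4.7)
The plan is to establish the two inclusions $\mathscr{SS}(\mathcal{A}) \subseteq \mathscr{SS}(\mathrm{cl}(\mathcal{A}))$ and $\mathscr{SS}(\mathrm{cl}(\mathcal{A})) \subseteq \mathscr{SS}(\mathcal{A})$ separately. The first is formal: since $\mathcal{A} \subseteq \mathrm{cl}(\mathcal{A})$, every function $\tilde{a}: X \to l_2(X)$, $\tilde{a}(x) := a(\delta_x)$, arising from $a \in \mathcal{A}$ also arises from $\mathrm{cl}(\mathcal{A})$. Hence any ss-structure making all maps from $\mathrm{cl}(\mathcal{A})$ ss-continuous makes those from $\mathcal{A}$ ss-continuous, so by minimality $\mathscr{SS}(\mathcal{A}) \subseteq \mathscr{SS}(\mathrm{cl}(\mathcal{A}))$.

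For the nontrivial inclusion, I would show that every $a \in \mathrm{cl}(\mathcal{A})$ induces an ss-continuous map $\tilde{a}: X \to l_2(X)$ with respect to $\mathscr{SS}(\mathcal{A})$. By minimality of $\mathscr{SS}(\mathrm{cl}(\mathcal{A}))$, this will force $\mathscr{SS}(\mathrm{cl}(\mathcal{A})) \subseteq \mathscr{SS}(\mathcal{A})$. Since $l_2(X)$ carries its metric ss-structure, a base of s-scales is given by covers by $\epsilon$-balls, so it suffices to show: for every $\epsilon > 0$ there is an s-scale $\mathcal{U} \in \mathscr{SS}(\mathcal{A})$ such that $\|a(\delta_x) - a(\delta_y)\| < \epsilon$ whenever $x,y$ lie in a common $U \in \mathcal{U}$.

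The key step is a standard three-$\epsilon$ approximation. Given $\epsilon > 0$, choose $b \in \mathcal{A}$ with $\|a - b\| < \epsilon/3$; then for each $x \in X$, $\|a(\delta_x) - b(\delta_x)\| = \|(a-b)(\delta_x)\| \le \|a-b\| < \epsilon/3$, since $\|\delta_x\| = 1$. Because $b \in \mathcal{A}$, the map $\tilde{b}: X \to l_2(X)$ is ss-continuous with respect to $\mathscr{SS}(\mathcal{A})$, so there exists $\mathcal{U} \in \mathscr{SS}(\mathcal{A})$ such that $\|b(\delta_x) - b(\delta_y)\| < \epsilon/3$ whenever $x,y$ share a common element of $\mathcal{U}$. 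The triangle inequality then gives $\|a(\delta_x) - a(\delta_y)\| < \epsilon$, establishing ss-continuity of $\tilde{a}$.

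I do not anticipate any serious obstacle: the essential point is that the operator norm on $B(l_2(X))$ dominates the pointwise norm on the unit vectors $\delta_x$, so operator-norm convergence transfers uniformly into uniform convergence of the maps $\tilde{a}$ on all of $X$. The only mild subtlety to verify is that the ss-structure on the codomain $l_2(X)$ is the standard metric one, so that the scale of $\epsilon$-balls forms a base; once this is fixed, the uniform approximation $\sup_x \|a(\delta_x) - b(\delta_x)\| \le \|a-b\|$ makes the inclusion immediate and completes the argument.
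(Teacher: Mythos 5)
Your proposal is correct and follows essentially the same route as the paper: the paper's proof is exactly the norm-approximation argument (approximate $a$ by $b\in\mathcal{A}$ with $\|a-b\|<\epsilon/4$ and note the pullback of $\epsilon/4$-balls under $b$ refines the pullback of $\epsilon$-balls under $a$, using $\|(a-b)\delta_x\|\le\|a-b\|$), which is your three-$\epsilon$ step phrased via refinement of scales. You merely spell out the trivial inclusion and the minimality bookkeeping that the paper leaves implicit.
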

\begin{proof}
 Suppose $\epsilon > 0$, $a\in B(\ell^2(X))$, and $\mathcal{U}$ is the scale on $X$ obtained via $a$
 by pulling back all $\epsilon$-balls in $l_2(X)$. Approximate $a$ by $b$ so that $|a-b| < \epsilon /4$
 and notice that the scale on $X$ obtained via $b$
 by pulling back all $\epsilon/4$-balls in $l_2(X)$ refines $\mathcal{U}$.
\end{proof}

\begin{Definition}
 Given a small scale structure $\mathscr{SS}$ on a set $X$ consider all
 bounded operators $a:l_2(X)\to l_2(X)$ for which the function $X\to l_2(X)$ is ss-continuous.
 They form a C*-subalgebra of $B(l_2(X))$ which we denote by $C^\ast(\mathscr{SS})$.
  \end{Definition}

\begin{Problem}\label{SSvC*Problem}
 Characterize small scale structures $\mathscr{SS}$ on $X$ which are equal to the one induced by
 $C^\ast(\mathscr{SS})$.
\end{Problem}

To give a partial answer to \ref{SSvC*Problem}, we introduce a small scale analog of paracompactness.
One such analog was defined in \cite{CDV2} under the name of small scale paracompactness, so to avoid a conflict our
new notion is named differently in spite that it may be actually the best analog of paracompatness.
\begin{Definition}
 A small scale structure $\mathscr{SS}$ is \textbf{ss-strongly paracompact} if for each 
 scale $\mathcal{U}$ in $\mathscr{SS}$ there is an ss-continuous partition of unity
 $\phi:X\to \Delta(S)$ whose support is a scale in $\mathscr{SS}$ smaller than $\mathcal{U}$.
\end{Definition}

A result of M. Zahradnik \cite{Zah} implies that the Hilbert space is not ss-strongly paracompact. A simpler proof
(connected to the fact that the Hilbert space does not have Property A of G.Yu) can be found in \cite{CDV2}).

\begin{Theorem}
 If a small scale structure $\mathscr{SS}$ on $X$ is ss-strongly paracompact,  then it is equal to the one induced by
 $C^\ast(\mathscr{SS})$.
\end{Theorem}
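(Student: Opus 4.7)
The plan splits into two inclusions. The inclusion $\mathscr{SS}(C^\ast(\mathscr{SS})) \subseteq \mathscr{SS}$ is immediate: every $a \in C^\ast(\mathscr{SS})$ is, by definition, ss-continuous as a map $X \to \ell^2(X)$ with respect to $\mathscr{SS}$, so $\mathscr{SS}$ itself witnesses the continuity of all such maps and must therefore contain the smallest such structure $\mathscr{SS}(C^\ast(\mathscr{SS}))$.

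For the reverse inclusion I would fix $\mathcal{U} \in \mathscr{SS}$ and exhibit an operator $a \in C^\ast(\mathscr{SS})$ together with an $\epsilon > 0$ such that the preimage $a^{-1}(\mathcal{B}_\epsilon)$ of the $\epsilon$-ball cover of $\ell^2(X)$ refines $\mathcal{U}$; once this is done, $\mathcal{U}$ appears in $\mathscr{SS}(C^\ast(\mathscr{SS}))$ as the coarsening of a scale generated there. To construct $a$, ss-strong paracompactness supplies an ss-continuous partition of unity $\phi: X \to \Delta(S)$ whose support $\mathcal{W}$ is an s-scale with $\mathcal{W} \leq \mathcal{U}$, so $st(\mathcal{W},\mathcal{W})$ refines $\mathcal{U}$. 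By Lemma \ref{PUsInduceGoodPUs} there is an ss-continuous $\psi: X \to l_1(X)$ whose support coarsens $\mathcal{W}$ and refines $st(\mathcal{W},\mathcal{W})$; in particular $\mathrm{supp}(\psi)$ refines $\mathcal{U}$. Lemma \ref{PUsInduceBoundedOperators} then extends $\psi$ to a bounded operator $a \in B(\ell^2(X))$, and $a$ lies in $C^\ast(\mathscr{SS})$ because $\psi$ is ss-continuous and the inclusion $l_1(X) \hookrightarrow \ell^2(X)$ is $1$-Lipschitz.

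The technical heart of the proof is establishing $a^{-1}(\mathcal{B}_\epsilon) \prec \mathcal{U}$. If $\|\psi(x)-\psi(y)\|_{\ell^2} < 2\epsilon$, the desired conclusion is that $\mathrm{supp}(\psi(x)) \cap \mathrm{supp}(\psi(y)) \neq \emptyset$, because any common coordinate $u$ places $x$ and $y$ in the same element $S_u$ of $\mathrm{supp}(\psi)$ and hence in a common element of $\mathcal{U}$. For disjoint-support pairs one has $\|\psi(x)-\psi(y)\|_{\ell^2}^2 = \|\psi(x)\|_{\ell^2}^2 + \|\psi(y)\|_{\ell^2}^2$, so everything reduces to a uniform lower bound $\|\psi(z)\|_{\ell^2} \geq c > 0$.

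The main obstacle is securing that uniform bound. When $\psi(x)$ spreads its $l_1$-mass over many coordinates, $\|\psi(x)\|_{\ell^2}$ can be arbitrarily small and disjoint-support values can lie arbitrarily close in $\ell^2$. The natural fix is to arrange $\mathrm{supp}(\psi)$ to have uniformly bounded multiplicity $m$, for then Cauchy--Schwarz yields $\|\psi(z)\|_{\ell^2}^2 \geq 1/m$, disjoint-support pairs satisfy $\|\psi(x)-\psi(y)\|_{\ell^2} \geq \sqrt{2/m}$, and any $\epsilon < \sqrt{1/(2m)}$ completes the argument. I would try to secure bounded multiplicity by iterating ss-strong paracompactness through successive star-refinements and recoloring $\mathrm{supp}(\phi)$ by finitely many classes; failing that, I would replace $a_\psi$ by an auxiliary operator whose matrix entries are rescaled (for instance $A_{v,x} = \sqrt{\psi(x)(v)}$, after verifying a Schur test to ensure boundedness), so the columns $A\delta_x$ are unit vectors and disjoint-support columns sit at $\ell^2$-distance exactly $\sqrt{2}$.
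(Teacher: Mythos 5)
Your setup coincides with the paper's own argument: the inclusion $\mathscr{SS}(C^\ast(\mathscr{SS}))\subseteq\mathscr{SS}$ by minimality, then $\phi$ from ss-strong paracompactness, $\psi$ from Lemma \ref{PUsInduceGoodPUs} with support refining $\mathcal{U}$, and the operator $a$ from Lemma \ref{PUsInduceBoundedOperators}, which lies in $C^\ast(\mathscr{SS})$ because $\|\cdot\|_{2}\le\|\cdot\|_{1}$. The paper stops essentially there, asserting that this already places $\mathcal{U}$ in the induced structure. You went further and correctly isolated the quantitative step that is actually needed: an $\epsilon>0$ such that $\|\psi(x)-\psi(y)\|_{2}<\epsilon$ forces $\mathrm{supp}\,\psi(x)\cap\mathrm{supp}\,\psi(y)\ne\emptyset$, which amounts to a uniform lower bound on $\|\psi(z)\|_{2}$. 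That bound is genuinely unavailable in general: in $\ell^2$, unlike $\ell^1$ (where disjointly supported probability vectors are at distance exactly $2$), disjointly supported values of $\psi$ can be arbitrarily close once the mass of $\psi(z)$ spreads over many coordinates. So you have located the real crux, one that the paper's one-line conclusion also leaves unaddressed.

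The problem is that neither of your proposed repairs closes this gap, so the proof is incomplete at its self-declared technical heart. Bounded multiplicity of $\mathrm{supp}\,\psi$ is a finite-dimensionality-type condition: ss-strong paracompactness, as defined, produces an ss-continuous partition of unity whose support is a smaller scale and says nothing about multiplicity, and iterating star-refinements or ``recoloring by finitely many classes'' is precisely the technique that finite (uniform) dimension licenses, not something extractable from the hypothesis; you give no argument for it. The fallback operator with matrix entries $A_{v,x}=\sqrt{\psi(x)(v)}$ need not be bounded: if infinitely many points $x$ satisfy $\psi(x)(v_0)\ge 1/2$ for a single coordinate $v_0$ (entirely compatible with the hypotheses), then for $\xi=|F|^{-1/2}\sum_{x\in F}\delta_x$ one gets $\langle A\xi,\delta_{v_0}\rangle\ge(|F|/2)^{1/2}\to\infty$; and the Schur test you hoped to verify fails exactly in the spread-out regime that motivated the fix, since $\sum_v\sqrt{\psi(x)(v)}$ is of order $\sqrt{n}$ when the mass is spread over $n$ coordinates (while column unit norm alone never bounds an operator). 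Until you either produce a multiplicity-controlled $\psi$ from the stated hypothesis or find a genuinely different mechanism forcing $\ell^2$-separation of disjointly supported values, the second inclusion remains unproved in your write-up.
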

\begin{proof}
 Given a small scale $\mathcal{U}$ in $\mathscr{SS}$, pick an ss-continuous partition of unity  $\phi:X\to l_1(X)$ whose support is a scale in $\mathscr{SS}$ smaller than $\mathcal{U}$.
 Notice $\phi$ induces an element of $C^\ast(\mathscr{SS})$, so $\mathcal{U}$ belongs to the ss-structure induced by
 $C^\ast(\mathscr{SS})$ (use \ref{PUsInduceGoodPUs}).
\end{proof}

\end{document}